\newcommand{\pp}{\mathscr{P}}
\newcommand{\eqd}{\stackrel{Law}{=}}
\newcommand{\psim}{  \psi_j^{(m)}}
\renewcommand{\S}{S}
\newcommand{\PP}{{\mathcal P}}
\renewcommand{\H}{{\mathscr H}}
\newcommand{\wM}{\widetilde{\M}}
\newcommand{\E}{{\mathbb E}}
\newcommand{\hp}{\hat{p}}
\newcommand{\too}{\rightsquigarrow}
\newcommand{\C}{\mathcal{C}}
\newcommand{\M}{\mathcal{M}}
\def\R{\mathbb{R}}
\def\N{\mathbb{N}}
\newcommand{\eps}{\varepsilon}
\newcommand{\Var}{\operatorname{Var}}
\newcommand{\cov}{\operatorname{cov}}
\newcommand{\argmin}{\operatornamewithlimits{arg\,min}}
\newcommand{\argmax}{\operatornamewithlimits{arg\,max}}
\renewcommand{\P}{{\mathbb P}}
\renewcommand{\L}{\mathscr{L}}
\renewcommand{\u}{\breve{u}}
\newcommand{\B}{\mathcal{B}}
\newcommand{\W}{\mathcal{W}}
\renewcommand{\kappa}{\varkappa}
\newcommand{\p}{\Upsilon}
\newcommand{\ccc}{\mathfrak{c}}
\newcommand{\DD}{\mathcal{D}}
\newcommand{\DDD}{\DD}
\newcommand{\RR}{\mathcal{R}}
\newcommand{\I}{{\mathbb I}}
\newcommand{\F}{\mathcal{F}}
\renewcommand{\W}{\mathscr{W}}
\newenvironment{customthm}[1]
  {\innercustomthm}
  {\endinnercustomthm}
\begin{document}
\title{Extremes of Gaussian non-stationary processes and maximal deviation of projection density estimates
\thanks{The study has been funded by the Russian Academic Excellence Project '5-100'}}

\titlerunning{Extremes of Gaussian non-stationary processes}

\author{Valentin Konakov$^1$\\
Vladimir Panov$^1 (corresponding \; author)$\\Vladimir Piterbarg$^{1,2}$ 
}

\authorrunning{V.Konakov, V.Panov, and V.Piterbarg} 

\institute{1-International Laboratory of Stochastic Analysis and its Applications\\National Research University Higher School of Economics\\
             Pokrovsky boulevard 11, 109028 Moscow, Russia\newline
             \newline
             2 - Faculty of Mechanics and Mathematics \\
             Lomonosov Moscow State University\\
             GSP-1 Leninskie Gory, 119991 Moscow, Russia\newline
             \\                         
	\email{vkonakov@hse.ru         \and  vpanov@hse.ru \and piter@mech.math.msu.su
	}
}

\date{Received: \today}
\maketitle

\begin{abstract}
In this paper, we consider the distribution of the supremum of non-stationary Gaussian processes, and 
 present a new theoretical result on the asymptotic behaviour of this distribution. Unlike previously known facts in this field, our main theorem yields the asymptotic representation of the corresponding distribution function with exponentially decaying remainder term.  This result can be efficiently used for studying the projection density estimates, based, for instance, on Legendre polynomials. More precisely, we construct the sequence of accompanying laws, which approximates the distribution of maximal deviation of the considered estimates with polynomial rate.   Moreover, we  construct the confidence bands for densities, which are honest at polynomial rate to a broad class of densities.  

\keywords{Non-stationary Gaussian processes, Rice method, projection estimates, confidence bands, Legendre polynomials}
\subclass{60G70; 60G15; 62G07 }
\end{abstract}

\setcounter{tocdepth}{3}
\setcounter{secnumdepth}{3}
\newpage

\section{Introduction}
\label{intro}
Consider the probability 
\begin{eqnarray} \label{question}
P_{u}(X,M)=\P\bigl\{  \sup_{t\in M}X (t)\geq u \bigr\} 
\end{eqnarray}
for a Gaussian process \(X=\bigl(X(t)\bigr)_{t \geq 0}\), and a set \(M \subset \R\). The asymptotic behaviour of \(P_{u}(X,M)\) as \(u \to \infty,\) is a classical question in the extreme value theory for Gaussian processes, see, e.g.,  the book by Adler and Taylor (\citeyear{AT}).

 Not surprisingly, the answer drastically depends on the properties of the covariance function of \(X.\) For stationary processes, the question is discussed in details, for instance, in the monographs by Piterbarg (\citeyear{Piterbarg}, \citeyear{Piterbarg20}). Nevertheless, for nonstationary processes, only the main term in the asymptotics of~\eqref{question} is known. In fact, almost all  results of this type are based on Pickand's and Berman's methods, which do not allow to make any conclusions on the behaviour of the second and further terms, see Piterbarg and Prisiazhniuk (\citeyear{PP}), Hashorva and H\"usler (\citeyear{HH2000}), H\"usler and Piterbarg (\citeyear{HP}), Bai, D\c{e}bicki, Hashorva and Ji (\citeyear{BDHJ}), Bai, D\c{e}bicki, Hashorva and  Luo (\citeyear{BDHL}). Among all known techniques for the study of the asymptotic behaviour in \eqref{question}, only the Rice method of moments may yield the behaviour of further terms. But, to the best of our knowledge, the application of the Rice method to non-stationary processes is not described in the literature. 

Interestingly enough, the knowledge of the second term in the asymptotics of~\eqref{question} can be rather useful in some statistical problems. Assume that we are given by a sample   \(X_1,...X_n\) drawn from some absolutely continuous distribution, and  we wish to estimate the density \(p(x)\) of this distribution.  More precisely, for any \(\alpha \in (0,1),\) we aim to construct \((1-\alpha)\)- confidence sets \(\C_n(x)\) for \(p\) that are \textit{honest } to a given class \(\F\) of density functions in the sense 
\begin{eqnarray}\label{CB}
\inf_{p \in \F} \P \left\{
p(x) \in \C_n(x), \; x \in \R
\right\} \geq 1- \alpha + e_n,
\end{eqnarray}
where \(e_n \to 0\) as \(n \to \infty. \) 
Very often, the set \(\C_n(x)\) is constructed using an estimate \(\hat{p}_n(x)\) of \(p(x).\) In this respect, the confidence bands can be used for showing the quality of the estimates: the narrower is the confidence band, the better is the estimate. 

%

Typically, the construction of confidence bands is based on the so-called SBR-type ("Smirnov-Bickel-Rosenblatt") limit theorems, which yield the asymptotic behaviour of the maximal deviation of the considered estimate \(\hat{p}_n(x)\) in terms of 
\begin{eqnarray*}
\DDD_{n} = \sup_{u \in \R}
\frac{\left|
\hat{p}_n(u) -  p(u)
\right|
}{
\sqrt{p(u)}
}.
\end{eqnarray*}
The SBR-type theorems state that 
\begin{eqnarray}
\label{class}
\sup_{p \in \F} \left|
\P\left\{
\DDD_n
\leq \frac{
x
}{ a_n
}
+b_n
\right\}
- 
e^{-e^{-x}}
\right| 
\to 0
\end{eqnarray}
for some deterministic sequences \(a_n\) and \(b_n\) tending to infinity as \(n \to \infty\), 
see Smirnov (\citeyear{Smirnov}), Bickel and Rosenblatt (\citeyear{Bickel}), Gin{\'e}, Koltchinskii and Sakhanenko (\citeyear{GKH}), Gin{\'e} and Nickl (\citeyear{GN10}),  Bull (\citeyear{Bull}).

All of the aforementioned  papers deal with the case when \(\hat{p}_n(x)\) is either a kernel estimate or  \textit{certain} wavelet projection estimate (e.g., based on Haar wavelets or Battle-Lemarie wavelets).  In fact, there is one rather serious technical difficulty for the proof of the SBR-type theorem for any density estimate. To explain the nature of this difficulty, let us first mention that many density estimates can be represented as 
\begin{eqnarray}
\label{pnn}
\hat{p}_n (x)  = \int_\R \mathcal{K}(x,y) d\P_n(y),\end{eqnarray}
where \(\mathcal{K}\) is a kernel (typically depending on some parameters) and  \(\P_n=n^{-1} \sum_{i=1}^n \delta_{X_i}\) is the empirical measure. The proofs of all SBR-type theorems for \eqref{pnn} are based on the idea to show that the distribution of \(\DDD_n\) for 
\(\hat{p}_n(x)\) is (in some sense) close to the distribution function 
of the supremum of the Gaussian process 
\begin{eqnarray}\label{gaussian}
\Upsilon(x)  = \int_\R \mathcal{K}(x,y) dW(y),
\end{eqnarray}
where \(W\) is a Brownian motion. For instance, for  the kernel density estimates  \(\mathcal{K}(x,y) =\mathcal{K}_h (x,y)=K((x-y)h^{-1}) h^{-1}\) with some  \(K:\R \to \R\), \(h>0\), the process \(\Upsilon(x)\) is stationary for any \(h\). For  wavelets, \(\mathcal{K}(x,y) =\mathcal{K}_j(x,y) = 2^j\sum_k \phi(2^j x-k) \phi(2^j y-k),\) with the mother wavelet \(\phi\), and the resulting process  \(\Upsilon(x)\) turns out to be non-stationary.  Nevertheless, as it is shown by Gin{\'e} and Nickl (\citeyear{GN10}, \citeyear{GN}), in the case of wavelets, this process possesses the property of \textit{cyclostationarity} in the sense that the covariance function 
\(
r(x,x+u)=\cov(\Upsilon(x), \Upsilon(x+u))
\) is periodic in \(x\) with the same period for all \(u.\) Therefore,  the further analysis of \(\Upsilon(x)\) is based on well-developed extreme value theory for the processes of this type, see  Konstant and Piterbarg (\citeyear{KP93}), H{\"u}sler, Piterbarg,  and Seleznjev  (\citeyear{HPS}).  

To the best of our knowledge, the limit behaviour of the distribution function of  \(\DDD_n\) is not known for other projection estimates, which are not related to stationary or cyclostationary Gaussian processes (see some discussion in Gin{\'e} and Nickl, \citeyear{GN10}, and in  Chernozhukov, Chetverikov and Kato, \citeyear{CCK}).  In particular, surprisingly, such results are even not known for  projection estimates based on Legendre polynomials. This fact serves as the main motivation of our research. 

The rest of the paper is organized as follows. In Section~\ref{newres}, we provide a new theoretical result (Theorem~\ref{thm2} in Section~\ref{newres1}) revealing the asymptotic behaviour of nonstationary Gaussian processes. This result yield  the asymptotic decomposition of \eqref{question}  with exponential decay of the remainder term,  and this decomposition  will be used for the statistical applications presented below. Particular attention is drawn to the process~\eqref{gaussian}  with \(\mathcal{K}(x,y) = \sum_{j=0}^J \psi_j(x) \psi_j(y),\) where \(\psi_0,\psi_1,...\) are Legendre polynomials on \([-1,1],\) see Section~\ref{exmain}.

Section~\ref{statistics} deals with the statistical applications of Theorem~\ref{thm2}.  In Section~\ref{coll}, we discuss the projection density estimates based on the idea to divide the support of the density function into \(M\) subintervals and tend \(M\) to infinity.  In Section~\ref{Gausspr}, we 
formulate an important proposition (Proposition~\ref{prop1}), which clarifies the relation of the asymptotic behaviour of the maximal deviation of this estimate to the extreme value theory for Gaussian processes. 

In Section~\ref{Sal}, we use Theorem~\ref{thm2} and Proposition~\ref{prop1} for the construction of  a sequence of distribution functions \(A_M(x)\)  (sequence of accompanying laws), which approximates the distribution function of the maximal deviation at polynomial rate, that is,
 \begin{eqnarray}\label{mmain}
\sup_{x \in \R} 
\left| 
\P \left\{
		\sqrt{\frac{n}{M}}
\DDD_{n}
		\leq x
	\right\}
	- A_{M}(x)
\right| 
	&\leq&
	\bar{c} n^{-\gamma}.
\end{eqnarray}
for some positive constants \(\bar{c}\) and \(\gamma.\) Next, we construct the confidence bands for the density \(p(x)\), which are honest at polynomial rate to some classes of densities.  Finally, in Section~\ref{exex}, we provide a numerical example. 

In this paper, we also provide the Smirnov-Bickel-Rosenblatt-type theorem for the projection density estimates (Appendix~\ref{SBR}).  It would be a worth mentioning that the property~\eqref{mmain} is much more useful than the SBR- type theorem, because  the rates of convergence in the SBR-type theorem are of logarithmic order. Let us also mention that the construction of honest confidence bands in Section~\ref{Sal} is completely based on~\eqref{mmain}. 

All proofs are collected in Section~\ref{proofs}. Appendix~\ref{A} contains some comments on Theorem~\ref{thm2}.  

\section{Asymptotic behaviour of the maximum of Gaussian processes}\label{newres}

\subsection{Main result}\label{newres1}
In this section, we consider the asymptotic behaviour of \(P_{u}(|X|,[A,B])\) for a non-stationary Gaussian process  $X(t),$ $t\in\lbrack A,B],$  with zero mean and twice a.s. differentiable trajectories. For the ease of presentation, we first show the result for the case, when the variance of \(X(t)\) has only one point of maximum on \([A,B]\). Later, in Corollary~\ref{cc} we generalise our result to the situation when the number of points of maximum is finite. 

For a set $M\subseteq
\lbrack A,B],$ which is a closure of an open set, 
we denote by  $N_{u}^{+}(M)$ the number of up-crossings of \(X(t)\) of the level \(u\) on the set \(M.\) Assume that for any \(t \in [A,B],\) the density function of $(X(t),X^{\prime}(t))$ exists, and let us denote it by 
$p_{t}(u,x)$. It holds
\begin{equation}
\E N_{u}^{+}(M)=\int_{M}\int_{0}^{\infty}xp_{t}(u,x)dxdt, \label{EN}%
\end{equation}
see Section~14.3 from~\cite{Piterbarg20}.

\begin{theorem}
\label{thm2} Assume that the process $X(t),$ $t\in\lbrack A,B]$  has zero mean and twice a.s. differentiable trajectories.  Assume also that the variance $\sigma^{2}(t)$ of  $X(t)$ attains its maximum at only one point, $t_{0}\in\lbrack A,B].$ Denote $S:=\sigma
^{2}(t_{0}).$ 
Moreover, assume non-degeneracy of the derivative, namely, assume that  if $\sigma^{\prime}%
(t_{0})=0,$ then
\begin{equation}
\P \left\{  X^{\prime}(t_{0}) = 0\right\} <1. \label{der}
\end{equation} For $\delta>0,$ denote \emph{ }
\[
\M(\delta):=\left\{  t\in\lbrack A,B]:\sigma^{2}(t)\geq\frac{S}{1+\delta
}\right\}.
\]
Then there exists some small \(\delta>0\) and some \(\chi>0\) such that   
\begin{eqnarray}\label{fmain}
\P\Bigl\{\max_{t\in\lbrack A,B]}\bigl|X(t)\bigr|\geq u\Bigr\} 
=
\pp(u) +O\Bigl(e^{-u^{2}(1+\chi)/(2S)}\Bigr),\qquad
u\rightarrow\infty,
\end{eqnarray}
where the function \(\pp:\R_+ \to \R\) is defined as follows: 
\begin{enumerate}[(i)]
\item if $t_{0}=A$ or $t_{0}=B$, and $\sigma^{\prime}(t_{0})\neq0, $ then
\begin{equation}
\pp(u)=2 \P\Bigl\{X(t_{0})\geq
u\Bigr\}; \label{pmax0}%
\end{equation}
\item if $t_{0}=A$ and $\sigma^{\prime}(t_{0})=0,  $ then
\begin{eqnarray}
\pp(u)=2\P\Bigl\{X(t_{0}%
)\geq u\Bigr\}+2\E \left[ N_{u}^{+}(\M(\delta))\right]; \label{pmax00}%
\end{eqnarray}
\item if $t_{0}\in(A,B]$ and $\sigma^{\prime}(t_{0})=0,$ then
\begin{equation}
\pp(u) =2\E \left[ N_{u}%
^{+}(\M(\delta))\right].\label{pmax}%
\end{equation}
\end{enumerate}
\end{theorem}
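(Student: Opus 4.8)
The plan is to combine the Rice method of moments with a careful localization near the unique variance-maximizing point $t_0$. First I would reduce the two-sided problem $\max_{t}|X(t)|\ge u$ to a one-sided one: by symmetry of the Gaussian law and the exponential smallness of the probability that $X$ attains level $u$ in absolute value at two well-separated locations, one has $\P\{\max_t|X(t)|\ge u\}=2\P\{\max_t X(t)\ge u\}+O(e^{-u^2(1+\chi)/(2S)})$. This "double-sum"-type estimate is standard in the stationary theory and uses only that $\mathrm{Var}(X(t))\le S$ with equality only at $t_0$, together with bounds on the covariance away from the diagonal; the nondegeneracy assumption~\eqref{der} enters exactly here, to ensure the joint density $p_{t_0}$ is well behaved and the event does not concentrate on a degenerate line.

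Next I would localize to the neighborhood $\M(\delta)$. Writing $\P\{\max_{[A,B]}X\ge u\}=\P\{\max_{\M(\delta)}X\ge u\}+\P\{\max_{[A,B]\setminus\M(\delta)}X\ge u, \max_{\M(\delta)}X<u\}$, the second term is bounded by $\P\{\max_{[A,B]\setminus\M(\delta)}X\ge u\}$, and since $\sigma^2(t)\le S/(1+\delta)$ there, a crude union/metric-entropy (Borell–TIS or Dudley) bound gives $O(e^{-u^2(1+\delta')/(2S)})$ for some $\delta'>0$ depending on $\delta$; choosing $\chi<\delta'$ absorbs this into the remainder. So it suffices to analyze $\P\{\max_{\M(\delta)}X\ge u\}$. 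Here I invoke the Rice expansion: with $N_u^+(\M(\delta))$ the number of up-crossings,
\[
\P\Bigl\{\max_{\M(\delta)}X\ge u\Bigr\}=\P\{X(a_\delta)\ge u\}+\E[N_u^+(\M(\delta))]-R_u,
\]
where $a_\delta$ is the left endpoint of $\M(\delta)$ (which equals $A$ precisely in cases (i)–(ii)), and $0\le R_u\le \tfrac12\E[N_u^+(\M(\delta))(N_u^+(\M(\delta))-1)]$ is controlled by the second factorial moment. The three cases of the theorem then come from evaluating the leading behavior of $\P\{X(a_\delta)\ge u\}$ versus $\E[N_u^+]$: when $t_0$ is an interior maximum with $\sigma'(t_0)=0$, the boundary term $\P\{X(A)\ge u\}$ is $O(e^{-u^2(1+\chi)/(2S)})$ since $\sigma^2(A)<S$, leaving~\eqref{pmax}; when $t_0=A$, that boundary term is exactly $\P\{X(t_0)\ge u\}$, and if in addition $\sigma'(t_0)=0$ we keep both terms as in~\eqref{pmax00}; if $t_0$ is an endpoint with $\sigma'(t_0)\ne0$, then a Laplace-type analysis shows $\E[N_u^+(\M(\delta))]$ is of the same exponential order as the remainder (the variance does not have a critical point, so up-crossings of level $u$ near $t_0$ are exponentially rare relative to the value at $t_0$), giving~\eqref{pmax0} after the factor $2$ from the two-sided reduction.

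The main obstacle is the quantitative control of the second factorial moment $\E[N_u^+(\M(\delta))(N_u^+(\M(\delta))-1)]$ and showing it is $O(e^{-u^2(1+\chi)/(2S)})$, i.e., genuinely of smaller exponential order than $\pp(u)$. This is where non-stationarity bites: unlike the stationary or cyclostationary settings, one cannot quote an off-the-shelf Kac–Rice double-integral bound, and one must estimate
\[
\int\!\!\int_{\M(\delta)^2}\!\!\int_0^\infty\!\!\int_0^\infty x_1 x_2\, p_{t_1,t_2}(u,u,x_1,x_2)\,dx_1dx_2dt_1dt_2
\]
by splitting $|t_1-t_2|$ into a diagonal zone (where a Taylor expansion of the conditional mean and variance of $(X'(t_1),X'(t_2))$ given $X(t_1)=X(t_2)=u$, using twice-differentiability of trajectories and the non-degeneracy~\eqref{der}, yields a bound with an extra small factor) and an off-diagonal zone (where the joint density of $(X(t_1),X(t_2))$ at $(u,u)$ decays like $e^{-u^2(1+\chi)/(2S)}$ because the $2\times2$ covariance matrix, evaluated at two points at least one of which sits where $\sigma^2<S$ or where the correlation is bounded away from $1$, has appropriately large inverse quadratic form). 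Making the exponent uniform — showing a single $\chi>0$ works for both the double-moment bound, the off-$\M(\delta)$ bound, and the two-sided reduction, for a suitable small $\delta$ — is the delicate bookkeeping that the proof must carry out; I would fix $\delta$ small first, extract $\delta'$ from the localization step, and then choose $\chi$ smaller than the minimum of all the exponential gains obtained.
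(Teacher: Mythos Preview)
Your reduction steps---localizing to $\M(\delta)$ via a Borell-type bound on $[A,B]\setminus\M(\delta)$, and passing from the two-sided maximum to the one-sided one by bounding $\P\{\max X\ge u,\max(-X)\ge u\}$ through the field $X(s)-X(t)$ on $\M(\delta)\times\M(\delta)$---match the paper's Step~1 essentially verbatim. The decisive divergence is in how you handle the remainder in the Rice expansion for cases (ii)--(iii). You propose to bound it by the second factorial moment $\E[N_u^+(N_u^+-1)]$ and then estimate the resulting double Kac--Rice integral by a diagonal/off-diagonal split. The paper deliberately \emph{avoids} the second factorial moment: it introduces instead the count $N_u^{-+}$ of down-crossings of level $u$ that are followed by a later exceedance of $u$, shows $p_2\le \E N_u^{-+}$ and $p_1=\E N_u^+ +O(\E N_u^{-+})$, and bounds $\E N_u^{-+}$ by expressing the conditional probability $\P(\max_{[t,b]}X>u\mid X(t)=u,\,X'(t)=x)$ through a second-order Taylor expansion $X(s)=X(t)+X'(t)(s-t)+\tfrac12 X''(t_s)(s-t)^2$ and a Borel--TIS bound on the standardized conditional process $X''$. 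The paper remarks explicitly that this bypasses any assumption on the second factorial moment and therefore works under the theorem's hypotheses (merely twice a.s.\ differentiable trajectories). Your diagonal analysis of the double integral, by contrast, needs the joint density of $(X(t_1),X'(t_1),X(t_2),X'(t_2))$ to exist and be controllable as $t_1\to t_2$, which in the non-stationary setting typically requires Rice-type regularity/nondegeneracy for $X''$ beyond what is assumed here. So as written, your plan has a genuine gap precisely at the step you yourself flagged as the main obstacle; the paper's $N_u^{-+}$ device is what closes it.

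Two smaller corrections. First, you locate the role of~\eqref{der} in the two-sided reduction, but in the paper it enters only in Step~3, to ensure $r_{11}(t_0,t_0)=\Var X'(t_0)>0$ so that the conditional variance $d^2(t)$ does not degenerate and the analysis of $X''$ goes through. Second, in case (i) the mechanism making $\E N_u^+(\M(\delta))$ exponentially negligible is not a ``Laplace-type'' argument on the variance profile but the sign condition $r_{10}(t_0,t_0)=\tfrac12(\sigma^2)'(t_0)<0$: this forces the conditional mean $m(t)$ of $X'(t)$ given $X(t)=u$ to be strictly negative on $\M(\delta)$, so the inner integral $\int_0^\infty x\,p_{X'(t)\mid X(t)=u}(x)\,dx$ already carries an extra Gaussian factor $e^{-m(t)^2/(2d^2(t))}$.
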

\begin{proof}
The proof is given in Section~\ref{main}. 
\end{proof}
\begin{remark}
The first step in the proof of  this theorem is to show that   formula~\eqref{fmain} is valid  with an appropriate function \(\pp: \R_+ \to \R\) \textit{for any \(\delta>0\)} and some \(\chi>0\) depending on \(\delta\). Namely, we show  the equality
\begin{equation*}
P_{u}(|X|,[A,B])=2P_u(X,M(\delta))+O\left(  e^{-u^{2}(1+\chi)/2}\right), \qquad u \to \infty.
\end{equation*}
 Formulas \eqref{pmax0}, \eqref{pmax00}, \eqref{pmax} are proved under the assumption that \textit{\(\delta\) is small enough}. Therefore, instead of the interval \([A,B]\), one can consider any smaller closed interval containing the point \(t_0\).
\end{remark}
\begin{remark}
It would be a worth mentioning that  the dependence between \(\chi\) and \(\delta\) may be complicated. The conditions on  $\chi$ are given in\ (\ref{chi0}, \ref{chi00}) for the case
(i), and in (\ref{chi_final}) for the cases (ii), (iii). We will see from the proof that the optimisation (maximisation) of \(\chi\) depending on \(\delta\) is possible, provided that the covariance function of \(X(t)\) is known. One example of such optimisation is given in Appendix~\ref{A}.

\end{remark}

\begin{remark} \label{rem2} 
The difference between (\ref{pmax00}) and (\ref{pmax}) for the cases   $t_{0}=A$  and $t_{0}=B$  ($\sigma^{\prime} (t_{0})=0$) can be explained by the fact that these expressions are obtained by the calculation of the number of up-crossings of the level. In fact, in the first case the point \(A\) gives significant contribution to the asymptotical behaviour, while in the second case the point \(B\) doesn't contribute to the asymptotics.  Moreover, with high probability the down-crossing after the up-crossing "will not have time" \;to happen, and the event  $\{X(B)>u\}$ will occur. If \(t_0=B,\) one can use also the equality (\ref{pmax00}) with the transformation of time  $t\Rightarrow B-t.$ After this transformation the up-crossings in the original time are transformed into the down-crossings. Therefore, the equality \eqref{pmax00} holds in the reverse time. \end{remark}
Now let us turn towards more general case.

\begin{corollary}\label{cc} 
Let $X(t),$ $t\in\lbrack A,B],$ be a Gaussian process with zero mean. Denote the  correlation function of \(X(t)\) by $\rho(s,t).$ Assume that the trajectories of $X(t)$ are twice
a.s. differentiable. Assume also that the variance of the process \(X(t)\) reaches its maximum at finite number of points, say $k$ points. Let us choose disjoint intervals $\M_{i},$ $i=1,...,k,$ each containing only one point of maximum of variance, such that for any \(i,j =1..k\), \(i \ne j,\)
\begin{eqnarray}\label{rrho}
\max_{(s,t)\in \M_{i}\times \M_{j}}\rho(s,t)<1.
\end{eqnarray}
Then there exists some $\chi>0$ such that
\begin{eqnarray}\label{kjl}
\P\Bigl\{\max_{t\in\lbrack A,B]}\bigl|X(t)\bigr|\geq u\Bigr\}=\sum_{i=1}
^{k} \pp_i(u)+O\Bigl(e^{-u^{2}
(1+\chi)/(2S)}\Bigr),
\end{eqnarray}
where \(\pp_i(u)\) are defined in Theorem~\ref{thm2} applied to  \(\P\Bigl\{\max_{t\in \M_{i}}\bigl|X(t)\bigr|\geq u\Bigr\}.\)
\end{corollary}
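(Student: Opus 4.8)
The plan is to reduce the multi-point case to the single-point case already handled by Theorem~\ref{thm2}, and the main work is in localizing the supremum to the disjoint neighbourhoods $\M_i$ and then controlling the cross-terms coming from the inclusion–exclusion.

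First I would localize. Since each $\M_i$ is a closed interval containing exactly one maximum point $t_0^{(i)}$ of $\sigma^2(\cdot)$, the complement $[A,B]\setminus\bigcup_{i=1}^k \M_i^{\circ}$ is a compact set on which $\sigma^2(t)\le S/(1+\delta_0)$ for some $\delta_0>0$. A standard Borell–TIS / Landau–Shepp bound (or the crude bound $\P\{\sup_{t\in K}|X(t)|\ge u\}\le C u^{a} e^{-u^2(1+\delta_0)/(2S)}$ obtained from a Dudley entropy estimate, using the a.s. differentiability of trajectories to bound metric entropy) shows that the contribution of this ``bulk'' region is $O(e^{-u^2(1+\chi_0)/(2S)})$ for a suitable $\chi_0\in(0,\delta_0)$. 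Hence, up to such an exponentially small error,
\[
\P\Bigl\{\max_{t\in[A,B]}|X(t)|\ge u\Bigr\}
=\P\Bigl\{\max_{i=1,\dots,k}\max_{t\in\M_i}|X(t)|\ge u\Bigr\}+O\bigl(e^{-u^2(1+\chi_0)/(2S)}\bigr).
\]

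Next I would apply inclusion–exclusion to the finite union of events $E_i:=\{\max_{t\in\M_i}|X(t)|\ge u\}$. The leading term $\sum_{i=1}^k \P(E_i)$ is, by Theorem~\ref{thm2} applied on each $\M_i$, equal to $\sum_{i=1}^k \pp_i(u)$ modulo an error $O(e^{-u^2(1+\chi)/(2S)})$ (take $\chi$ the minimum of the $\chi$'s produced by Theorem~\ref{thm2} on the individual intervals, and of $\chi_0$). It then remains to bound each pairwise intersection $\P(E_i\cap E_j)$, $i\ne j$, by something exponentially smaller than $e^{-u^2/(2S)}$. This is exactly where the hypothesis \eqref{rrho} enters: on $\M_i\times\M_j$ the correlation is bounded away from $1$, say $\rho(s,t)\le 1-\eta<1$, so that the two-dimensional Gaussian vector $(X(s)/\sigma(s),X(t)/\sigma(t))$ has a variance matrix bounded away from singular, uniformly in $(s,t)\in\M_i\times\M_j$. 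A standard large-deviation estimate for the bivariate Gaussian then gives
\[
\P\{|X(s)|\ge u,\ |X(t)|\ge u\}\le C\exp\!\Bigl(-\frac{u^2}{S}\cdot\frac{1}{1+(1-\eta)}\Bigr)
=C\exp\!\Bigl(-\frac{u^2}{S(2-\eta)}\Bigr),
\]
and after a discretization of $\M_i\times\M_j$ on a grid of mesh $\asymp u^{-1}$ together with a chaining bound for the oscillation of $X$ between grid points (again using twice-differentiability of trajectories, hence boundedness of $\sup|X'|$ moments), one obtains $\P(E_i\cap E_j)\le C u^{b} e^{-u^2/(S(2-\eta))}=O(e^{-u^2(1+\chi_1)/(2S)})$ for $\chi_1:=\eta/(2-\eta)>0$. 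Taking $\chi:=\min\{\chi_0,\chi_1,\chi_{\mathrm{Thm},1},\dots,\chi_{\mathrm{Thm},k}\}>0$ collects all errors into the stated form, and since $k$ is fixed the finitely many pairwise (and higher-order, which are even smaller) terms cause no trouble.

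The main obstacle is the second step: making the bound on $\P(E_i\cap E_j)$ genuinely uniform over the continuum $\M_i\times\M_j$ rather than for a single pair of points. The pointwise bivariate estimate is elementary, but upgrading it to a supremum bound requires an entropy/chaining argument for the Gaussian field on the product set, and one must check that the polynomial prefactor $u^{b}$ picked up there is harmless — which it is, because it is absorbed into any $e^{-u^2\chi_1/(2S)}$ with $\chi_1>0$. A secondary technical point is to verify that the localization error in the first step can indeed be taken with an exponent strictly larger than $1$ in the bracket; this follows because $\sup_K \sigma^2 < S$ strictly, so the Borell-type bound automatically produces a gain $\chi_0>0$.
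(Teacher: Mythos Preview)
Your overall structure---localize to the $\M_i$, apply inclusion--exclusion, then bound the pairwise intersections $\P(E_i\cap E_j)$ using that $\rho<1$ on $\M_i\times\M_j$---is exactly the paper's strategy, and your argument is correct. The only real difference is in how the cross term is handled. The paper does not go through a pointwise bivariate bound plus discretization/chaining; instead it uses the one-line inequality
\[
\P\Bigl\{\max_{t\in\M_i}X(t)\ge u,\ \max_{t\in\M_j}X(t)\ge u\Bigr\}
\;\le\;
\P\Bigl\{\max_{(s,t)\in\M_i\times\M_j}\bigl(X(s)+X(t)\bigr)\ge 2u\Bigr\},
\]
and then applies the ready-made supremum tail bound (Theorem~8.1 in Piterbarg's monograph, the same tool used in the localization step) to the Gaussian field $Y(s,t)=X(s)+X(t)$, whose variance on $\M_i\times\M_j$ is at most $2S(1+\max\rho)<4S$. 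This produces the same exponent $\chi_1>0$ as your computation but avoids the grid and the chaining entirely. Your route is more self-contained (it does not invoke an external theorem for the two-parameter field), while the paper's trick is shorter and reuses the same black box already needed for the localization; either way the polynomial prefactors are harmlessly absorbed, as you note.
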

\begin{proof} The proof is given in Section~\ref{main}. 
\end{proof} 
\begin{remark}\label{corcor}
Under the assumptions of Corollary~\ref{cc}, we get 
\begin{eqnarray}
\P \Bigl\{
\max_{t \in [A,B]} |X(t)| > u 
\Bigr\} 
&=&
\frac{
\ccc_0
}{ u
}
e^{-u^2/(2 S)} 
\bigl( 
1+ o(1)
\bigr) 
\label{Gauss} 
\end{eqnarray}
as \(u \to \infty,\) where \(\ccc_0>0\). 
This form is rather classical and can be obtained also using the Pickand or Berman methods. For instance, Theorem~D.3 and Corollary~8.3 from \cite{Piterbarg} yield the same result. Nevertheless, for the statistical applications presented below the rates of convergence in \eqref{Gauss} should be clarified, and therefore we need more precise form~\eqref{kjl}.
\end{remark}


\subsection{Example}\label{exmain}
As we will see below,  the study  of the maximal deviation of projection density estimates is closely related to the analysis of the Gaussian process
\begin{eqnarray}\label{Ups}
\Upsilon(t) = \sum_{j=0}^J \psi_j (t) Z_j,
\end{eqnarray}
where \(Z_0, Z_1,...\) are i.i.d. standard normal random variables, and \(\psi_0, \psi_1,...\) is a basis in the space \(L^2([A,B]).\) The variance of the process \(\Upsilon(t)\) is equal to 
\begin{eqnarray}\label{vvar}
\sigma^2(t) = \Var(\Upsilon(t))= \sum_{j=0}^J \psi^2_j (t). 
\end{eqnarray}
Let us consider more precisely the case of  Legendre polynomials,  which are  defined on \([-1,1]\) as 
\begin{eqnarray*}
\psi_j(x) =  \sqrt{(2 j +1) / 2 } \cdot P_j (x), \qquad j=0,1,2,...,
\end{eqnarray*}
where
\begin{eqnarray*}
		P_{j}(x)= \frac{1}{j! 2^{j}}\left[ \left( x^{2}-1\right) ^{j}\right] ^{(j)}, \quad |x|\leq 1.
\end{eqnarray*}
Maximum of each function \(\psi_j(t)\), \(j=0,1,..\) (as well as maximum of the variance \(\sigma^2(t)\)) is attained at two points, \(1\) and \(-1,\)
and 
\begin{eqnarray*}
S= \sum_{j=0}^J \psi_j^2 (1) = \frac{(J+1)^2}{2
}.
\end{eqnarray*}
Now let us apply Corollary~\ref{cc} with  \(\M_1 = [-1,-2/3]\) and \(\M_2=[2/3,1]\). Note that the condition~\eqref{rrho} is fulfilled,  
\begin{eqnarray*}
\rho(s,t)=\frac{\sum_{j=0}^J \psi_j(s) \psi_j(t)}{\sqrt{\sum_{j=0}^J \psi_j^2(s)}\cdot \sqrt{\sum_{j=0}^J \psi_j^2(t)}} <1, \qquad \forall t \ne s.
\end{eqnarray*}
The process \(X_t\), considered for \(t \in \M_1\) and \(t \in \M_2,\) corresponds to the case~(i) of Theorem~\ref{thm2}, and 
\begin{eqnarray*}
\pp_1(u) = \pp_2(u) = 2 \P \Bigl\{
\Upsilon(1) \geq u
\Bigr\}=
2 \Bigl(
 1 - \Phi\bigl(
 	u/\sqrt{S}
 \bigr)
\Bigr).
\end{eqnarray*}
Therefore there exists some \(\chi>0\) such that 
\begin{eqnarray*}
\P\Bigl\{\max_{t\in\lbrack -1,1]}\bigl|\Upsilon(t)\bigr|\geq u\Bigr\}
&=&
4 \Bigl(
 1 - \Phi\bigl(
 	u/\sqrt{S}
 \bigr)
\Bigr)+O\Bigl(e^{-u^{2}
(1+\chi)/(2S)}\Bigr).
\end{eqnarray*}
Note that due to the decomposition 
\begin{eqnarray*}
1- \Phi(u) = \frac{1}{\sqrt{2\pi} u} e^{-u^2/2} ( 1- u^{-2} + o(u^{-2})), 
\end{eqnarray*}
we get  \(\mathfrak{c}_0=4\sqrt{S/(2\pi)} = 2(J+1)/\sqrt{\pi}\) in \eqref{Gauss}.

\section{Asymptotic behaviour of the maximal deviation of projection density estimates}\label{statistics}
  Throughout the section we  assume that the function \(p\) belongs to the space  \(L^2(I)\) with \(I=[A,B]\). Let \(\Psi := \bigl \{ \psi_0, \psi_1,\psi_2,...\bigr\}\) be an orthonormal basis of this space.

\subsection{Projection estimates} 
\label{coll} 
  
Let us divide the interval on \(M\) subintervals of length \(\delta=(B-A)/M,\)  and on each subinterval \(I_{m}=[a_m, b_m]:= [A+\delta(m-1), A+\delta m], m=1..M\),  we reproduce the basis: 
\begin{eqnarray}
\label{psij}
\psi_j^{(m)} (x) = \sqrt{M} \cdot
\psi_j \Bigl(
	M(x-a_m) + A
\Bigr), \quad  m=1..M,\;\;  j=0,1,... .
\end{eqnarray}
Since \(p \in L^2([A,B]),\) one can project \(p\) onto the constructed basis and get for any \(M,\)
\begin{eqnarray*}
p(x) = \sum_{m=1}^M \sum_{j=0}^\infty \left[ 
\int \psim(u) p(u) du 
\right]
 \psim(x).
\end{eqnarray*}
This formula suggests the following definition. Given the sample \(X_1,..., X_n\), the  projection estimate of the density function \(p\) is defined as 
\begin{eqnarray}\label{projest}
\hat{p}_n(x) &=& \sum_{m=1}^M 
\sum_{j=0}^J 
\left[ 
\int \psim(u)d\P_n(u) 
\right]
 \psim(x),
\end{eqnarray}
where  \(J \in \N\). 

It would be a natural question why  we do not consider more simple construction, namely the projection estimate on the original basis  \(\psi_0, ..., \psi_J,\) and then tend \(J\) to infinity. The answer lies in purely technical area: it turns out that the main step in the construction of confidence intervals - studying the asymptotics of the maximum of corresponding Gaussian process - is more complicated. We will discuss this issue later, see Remark~\ref{rem3}.  

So, throughout this paper \(J\) is fixed and \(M\) tends to infinity as \(n\) grows. \newline

For the theoretical study, we need the following assumptions on the basis~\(\Psi\).
\begin{enumerate}
\item[(A1)] For any \(j=0..J, \) the function \(\psi_j\) is uniformly H\"older continuous with some exponent \(\alpha \in (0,1]\), that is, the \(\alpha-\)H\"older coefficient of \(\psi_j\)
\begin{eqnarray*}
\bigl| 
\psi_j
\bigr|_{\alpha} := \sup_{x \ne y, \; x,y \in [A,B]} \frac{|\psi_j(x) - \psi_j(y)|}{|x-y|^{\alpha}}
\end{eqnarray*}
is finite.

\item[(A2)] The maximum of the sum \(\sum_{j=0}^J \psi_j^2(x)\)  is attained at a finite number of points, which we denote below by \(x_\circ^{(1)}, ..., x_\circ^{(k)}.\) 

%
\end{enumerate}
\begin{example}\label{legendre}
Let us consider once more the example of Legendre polynomials, defined above in Section~\ref{exmain}. 
Condition (A1) holds with \(\alpha=1\) since the functions \(P_j(x)\) are continuously differentiable.  Moreover, absolute values of the polynomials \(P_j(x)\) attain its maximum, which is equal to \(1,\) at the points 1 and -1, and therefore the assumption (A2) is also fulfilled. 
\end{example}

\subsection{Relation to the extreme value theory for Gaussian processes}\label{Gausspr}
Let us first study the  "random part" of \(\DDD_n\) defined by 
\begin{eqnarray}
\label{RR}
	\RR_{n} &:=&		\sup_{u \in [A,B]} 
		\frac{				\left|
		\hp_{n} (u)  - \E[\hp_{n}(u)]  \right|
			}
			{		\sqrt{p(u)}
			},
\end{eqnarray}
and find a Gaussian process having the asymptotic behaviour of maximum closely related to the behaviour of \(\RR_n\). As we will show below in Proposition~\ref{prop1},  the Gaussian process is 
\begin{eqnarray}\label{zeta}
\p(x)=\int_{I} \Bigl( \sum_{j=0}^J 
\psi_j(x) \psi_j(u)
\Bigr) dW(u)= \sum_{j=0}^J \psi_j(x) \xi_j
 \qquad x \in [A,B],
\end{eqnarray}
where \(W\) is a Brownian motion, and \(\xi_0, ..., \xi_J\) are i.i.d. standard normal random variables. 

It would be a worth mentioning that all constants in Proposition~\ref{prop1} are uniform over the class \(\mathcal{P}_{q,H, \beta}\) of densities, which is defined for any \(q>0, \\H>0, \beta \in (0,1]\) as 
\begin{multline}\label{class}
\mathcal{P}_{q,H,\beta} := \Bigl\{ p - \mbox{ p.d.f. }, \quad p \in L^2([A,B]),\quad \inf_{x \in [A,B]} p(x) \geq q, 
\quad 
\bigl| 
p
\bigr|_{\beta} \leq H 
\Bigr\},
\end{multline}
where \(  \bigl|  p \bigr|_{\beta} \)
is the H{\"o}lder coefficient of
 the function \(p\). 
 \begin{proposition}\label{prop1} 
Consider the projection estimate~\eqref{projest} on \([A,B]\) constructed on the basis  \(\Psi\)  satisfying the condition (A1). 
Then there exists a positive constant \(\kappa\)  such that for any \(p \in \PP_{q,H,\beta}\) and any \(u \in \R_+\) it holds 
\begin{eqnarray}
\label{Fst1}
		\P \Bigl\{\sqrt{\frac{n}{M}}\RR_{n} \leq u\Bigr\}
		& \leq & 
		\left[
			\P \Bigl\{ 
				\zeta \leq
					 u + \gamma_{n,M}
			\Bigr\} 
		\right]^{M} +  \C_{1} n^{-\kappa},\\
		\label{Fst2}
		\P \Bigl\{\sqrt{\frac{n}{M}}\RR_{n} \leq u\Bigr\}
		& \geq & 
		\left[
			\P \Bigl\{ 
				\zeta \leq
					 u - \gamma_{n,M}
			\Bigr\} 
		\right]^{M}  -  \C_{1} n^{-\kappa},			
		\end{eqnarray}
 where \(\zeta := \sup_{x \in \R} |\Upsilon(x)|\), \[\gamma_{n,M} = \C_2 \frac{\log(n)}{\sqrt{n/M}} + \C_{3} \frac{\sqrt{\log(n)}}{\sqrt{M}},\]
 and \(\C_1, \C_2, \C_3 >0\) depend on \(q,H,\beta\).
\end{proposition}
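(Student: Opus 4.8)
The plan is to reduce the maximal deviation of the projection estimate over $[A,B]$ to a product of $M$ nearly-independent pieces, one per subinterval $I_m$, and then on each piece to invoke a Gaussian (strong) approximation of the centered empirical process by a Brownian bridge / Brownian motion. First I would write the centered estimate on the subinterval $I_m$ as
\[
\hp_n(x) - \E[\hp_n(x)] = \sum_{j=0}^J \psim(x)\,\Bigl(\textstyle\int \psim(u)\,d(\P_n-\P)(u)\Bigr),
\qquad x \in I_m,
\]
so that, after the change of variables $x = a_m + (y+1)/(2M)\cdot$(length normalization) implicit in \eqref{psij} and the normalization $\sqrt{n/M}$, the quantity $\sqrt{n/M}\,\RR_n$ restricted to $I_m$ is the supremum over a fixed interval of a process of the form $\sum_{j=0}^J \psi_j(y)\,\sqrt{n}\,\nu_{n,m,j}/\sqrt{p}$, where $\nu_{n,m,j}$ are the (centered, localized) linear functionals of the empirical measure. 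The key structural point is that $\RR_n = \max_{m=1..M}\RR_{n,m}$ where $\RR_{n,m}$ is the sup over $I_m$; the subintervals are disjoint, so the vectors of empirical functionals on different $I_m$ are "almost independent" — precisely, they are functionals of the empirical process evaluated on disjoint sets, hence negatively associated / independent up to the multinomial coupling, which is what produces the $M$-th power $[\P\{\zeta \le u \pm \gamma_{n,M}\}]^M$ rather than a sum.

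Second, on each fixed subinterval I would apply a KMT-type strong approximation (as in Giné--Koltchinskii--Sakhanenko, \citeyear{GKH}, or Bull, \citeyear{Bull}) to couple $\sqrt n\,(\P_n-\P)$, restricted to $I_m$ and tested against the $J+1$ Hölder functions $\psi_0,\dots,\psi_J$, with a Brownian bridge, and then with a Brownian motion $W$; since $p$ is bounded below by $q$ and is $\beta$-Hölder, the division by $\sqrt{p}$ is harmless and the $\sqrt p$ in the denominator of $\RR_n$ is, up to a controllable error of size $O(M^{-\beta})$ absorbed into $\gamma_{n,M}$, replaced by $\sqrt{p(x_\circ)}$ on the relevant small interval. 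This is exactly where the two terms in $\gamma_{n,M}$ come from: the term $\C_2\log(n)/\sqrt{n/M}$ is the KMT coupling rate for the empirical-to-Gaussian approximation of $J+1$ smooth functionals on a sample of effective size $n/M$ (the $\log n$ being the price of a high-probability bound uniform over the sup), while $\C_3\sqrt{\log n}/\sqrt M$ controls the oscillation error incurred by freezing $\sqrt{p}$ and by the Hölder modulus of $p$ over an interval of length $1/M$, again at a $1-n^{-\kappa}$ confidence level. After the coupling, the sup over $I_m$ of the Gaussian piece is distributed exactly as $\zeta = \sup_{x\in\R}|\Upsilon(x)|$, since $\sum_{j=0}^J \psi_j(x)\psi_j(u)$ integrated against $dW(u)$ is $\Upsilon(x)$ by \eqref{zeta}.

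Third, I would assemble the pieces: with probability at least $1-\C_1 n^{-\kappa}$ (union bound over $m$ costs a factor $M\le n$, absorbed into $\kappa$), simultaneously for all $m$ the coupled Gaussian process is within $\gamma_{n,M}$ of $\sqrt{n/M}\,\RR_{n,m}$ in sup norm; intersecting with the event $\{\RR_{n,m}\le u\}$ for all $m$ on the upper side, resp.\ the analogous lower-side inclusion, and using that the $M$ coupled Gaussian processes can be taken mutually independent (built from the restriction of one Brownian motion to disjoint blocks, after the multinomial/Poissonization coupling of the empirical measure across the $I_m$), gives the bracketed $M$-th powers in \eqref{Fst1}--\eqref{Fst2}. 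I expect the main obstacle to be making the "almost independence across subintervals" rigorous while simultaneously carrying the strong approximation: one must couple the empirical measure on all $M$ blocks at once to independent Gaussians with an error that is still $O(\log(n)/\sqrt{n/M})$ per block and such that the union over $m$ of the exceptional sets is still $O(n^{-\kappa})$ — the cleanest route is to Poissonize (replace $n$ by an independent $\mathrm{Poisson}(n)$), which makes the block contributions genuinely independent, do the KMT approximation blockwise, and then de-Poissonize at the cost of another $O(n^{-\kappa})$ term and a negligible adjustment to $\gamma_{n,M}$; keeping all constants uniform over $\PP_{q,H,\beta}$ requires that every estimate depend on $p$ only through $q$, $H$, $\beta$, which holds because the Hölder norms of $\psi_j$ are fixed and $p$ enters only via its lower bound $q$ and its modulus $H$.
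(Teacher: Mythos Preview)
Your outline is plausible but takes a genuinely different route from the paper. You propose to Poissonize and then couple blockwise on each $I_m$, deriving independence across blocks from the Poisson structure. The paper instead performs a \emph{single global} KMT coupling of the uniform empirical process $\sqrt{n}\,(\hat F_n - F)$ to one Brownian bridge $\B_n$ on all of $[A,B]$, then passes to one Brownian motion $\W_n$; after freezing $\sqrt{p(u)}\mapsto\sqrt{p(x)}$ on each $I_m$, the block-$m$ functional becomes an integral $\int_{I_m}(\cdots)\,d\W_n$, and since the $I_m$ are disjoint these $M$ integrals are automatically independent --- the $M$-th power in \eqref{Fst1}--\eqref{Fst2} falls out without any Poissonization or de-Poissonization. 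In the paper's bookkeeping the first term of $\gamma_{n,M}$ arises not from an ``effective sample size $n/M$'' but from the global KMT error $c_1\log n/\sqrt{n}$ being amplified by the factor $\sqrt{M}$ built into the scaling of $\psi_j^{(m)}$; the second term comes from the bridge-to-motion and $\sqrt{p}$-freezing steps, each costing $M^{-1/2}\sup_{[A,B]}|\W_n|\asymp M^{-1/2}\sqrt{\log n}$ at confidence $1-n^{-\kappa}$ (so it is a random oscillation bound, not the deterministic H\"older modulus $M^{-\beta}$ you invoke). The successive errors are then assembled by an elementary sandwiching lemma. Your Poissonization route should also go through, but the paper's single-coupling argument is shorter and sidesteps both the de-Poissonization step and the union bound over $M$ separate strong approximations.
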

 \begin{proof}
 The proof is given in Section~\ref{App1}. 
\end{proof}
\begin{remark}\label{rem1}
The most important case arises when \(n/M \to \infty\) and \(\gamma_{n,M}  \to 0\)  as \(n\) grows to infinity. For instance,  these conditions are fulfilled when \(M=M_n=\lfloor n^\lambda \rfloor\) with \(
\lambda \in (0,1)\). Under this choice, \(\gamma_{n,M}\asymp \C_2  n^{-(1-\lambda)/2} \log(n)\) if \(\lambda \in [1/2,1)\) and \(\gamma_{n,M} \asymp \C_3 n^{-\lambda/2} (\log(n))^{1/2}\) if \(\lambda \in (0,1/2)\).
\end{remark}
\begin{remark}\label{rem3} 
As it was mentioned at the end of Section~\ref{coll}, one can consider a simplified version of the estimate \(\hat{p}_n\), which is constructed without the splitting of the interval \([A,B]\) into \(M\) small subintervals. In this case, applying similar techniques, we can also show that  \(\RR_n\) is close  to \(\Upsilon(x)\), but the further study of \(\RR_n\) would be essentially different. 
We believe that this case merits separate publication.  
\end{remark}
It would be a worth mentioning that the process \(\Upsilon(x)\) is not cyclostationary. Therefore the asymptotic behaviour of the supremum of \(\Upsilon(x)\) cannot be studied following the same ideas as in previous papers on this topic, and new results on the behaviour of maximum of \(\Upsilon\) (namely, Theorem~\ref{thm2}) are needed. In the next two subsections, we show some results obtained via the combination of Theorem~\ref{thm2} and Proposition~\ref{prop1}.


\subsection{Sequence of accompanying laws} \label{Sal}
In the next theorem,  we will present a sequence of accompanying laws, which approximates the maximum deviation distribution at polynomial rate. The proof of this statement is essentially based on  Theorem~\ref{thm2}. 
\begin{theorem} \label{thm35}  
Assume that \(p\in \mathcal{P}_{q,H,\beta}\) with some  \(q,H>0, \; \beta \in (0,1]\), and assume that the basis functions \(\psi_j(x),  j=0,1,2,...\) satisfy the assumptions (A1) and (A2). Denote by \(\pp_i(u)\) the functions introduced in Corollary~\ref{cc} for  the process \(\Upsilon\) defined by \eqref{zeta}. Denote the sequence of distribution functions
\begin{equation}
\label{amy}
A_{M}(x):=\begin{cases}
\exp\Bigl\{
-M \sum_{i=1}^k \pp_i\bigl( 
x
\bigr) 
\Bigr\}, 
&\mbox{if }x\geq c_M, \\ 
0,  &\mbox{if }x< c_M,%
\end{cases}   
\end{equation}
where \(c_M=(2S \log{M})^{1/2} -S.\)
Assume that for the sequence \(M=M_n\) there exists some \(v>0\) such that 
\begin{eqnarray}\label{coord} 
V_{n,M}(v) := M^{v} \gamma_{n,M}
 \to 0, \qquad \mbox{as} \;\; n,M \to \infty.
\end{eqnarray}
\begin{enumerate}[(i)]
\item 
Then there exist some positive constants \(\bar{c}_1, \bar{c}_2, \bar{c}_3, \bar{c}_4, \theta, \kappa \) such that   for sufficiently large \(n,M\) and for any \(x\in \R\),
\begin{multline}\label{amm}
\sup_{x \in \R} 
\left| 
\P \left\{
		\sqrt{\frac{n}{M}}
\RR_{n}
		\leq x
	\right\}
	- A_{M}(x)
\right| 
	\\ \leq
	\bar{c}_1\; M^{-\theta}+
	\bar{c}_2\; n^{-\kappa}
	+
	 \bar{c}_3 V_{n,M}(v)
	 +\bar{c}_4 n^{1/2} M^{-\beta-1/2}.
\end{multline}
\item 
In particular, if \(M=\lfloor n^\lambda \rfloor\) with \(\lambda \in ((2 \beta+1)^{-1},1)\), the assumption~\eqref{coord} is fulfilled, and moreover, 
\begin{eqnarray*}
\sup_{x \in \R} 
\left| 
\P \left\{
		\sqrt{\frac{n}{M}}
\DDD_{n}
		\leq x
	\right\}
	- A_{M}(x)
\right| 
	&\leq&
	\bar{c} n^{-\gamma}.
\end{eqnarray*}
for some positive constants \(\bar{c}\) and \(\gamma.\)
\end{enumerate}
\end{theorem}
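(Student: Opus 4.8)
The plan is to start from the sandwich bound of Proposition~\ref{prop1}, insert into it the sharp tail expansion of Corollary~\ref{cc} for the \emph{$p$-free} process $\Upsilon$ of~\eqref{zeta}, convert the resulting $M$-th powers into exponentials, and then control the three perturbations that separate this from $A_M$: the $\gamma_{n,M}$-shift, the H\"older-$\beta$ regularity of $p$ at the partition scale $M^{-1}$, and (for part (ii)) the deterministic bias $\E\hat{p}_n - p$. Since $\Upsilon$ and $S$ depend only on $\Psi$ and $J$ and not on $p$, all $p$-dependence is confined to the constants $\C_1,\C_2,\C_3$ of Proposition~\ref{prop1} and to the bias estimate, which are already uniform over $\PP_{q,H,\beta}$, so the resulting bound is automatically uniform on that class. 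Write $q(y):=\P\{\zeta>y\}=\P\{\max_t|\Upsilon(t)|\ge y\}$; by Corollary~\ref{cc}, $q(y)=\sum_{i=1}^k\pp_i(y)+O\bigl(e^{-y^2(1+\chi)/(2S)}\bigr)$ as $y\to\infty$, and since $c_M=(2S\log M)^{1/2}-S\to\infty$ this expansion holds uniformly for $y\ge c_M$ once $M$ is large.

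\emph{The core estimate for $x\ge c_M$.} By Proposition~\ref{prop1} it suffices to bound $\bigl|[\P\{\zeta\le x\pm\gamma_{n,M}\}]^M - A_M(x)\bigr|$. For $y\ge c_M$ one has $\log[\P\{\zeta\le y\}]^M=M\log(1-q(y))$ with $q(y)\le q(c_M)$ small, and $\bigl|M\log(1-q(y))+M\sum_i\pp_i(y)\bigr|\le M q(y)^2+M\cdot O\bigl(e^{-y^2(1+\chi)/(2S)}\bigr)$; both terms decrease in $y$, so it is enough to estimate them at $y=c_M$. The identity $c_M^2/(2S)=\log M-(2S\log M)^{1/2}+S/2$ gives $Mq(c_M)\asymp(2S\log M)^{-1/2}e^{(2S\log M)^{1/2}-S/2}$, whence $Mq(c_M)^2\lesssim M^{-1}e^{2(2S\log M)^{1/2}}$ and $Me^{-c_M^2(1+\chi)/(2S)}\lesssim M^{-\chi}e^{(1+\chi)(2S\log M)^{1/2}}$; since $e^{c(\log M)^{1/2}}=o(M^{\varepsilon})$ for every $\varepsilon>0$, both are $\le M^{-\theta}$ for a suitable $\theta\in(0,\min(1,\chi))$ and all large $M$. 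As both logarithms are $\le 0$, $|e^a-e^b|\le|a-b|$ yields $\bigl|[\P\{\zeta\le y\}]^M-A_M(y)\bigr|\lesssim M^{-\theta}$ uniformly over $y\ge c_M$. The $\gamma_{n,M}$-shift is absorbed via the continuity modulus: from $A_M'(y)=M\sum_i(-\pp_i'(y))A_M(y)$, $|\pp_i'(y)|\lesssim e^{-y^2/(2S)}$ (e.g. $-\pp_i'(y)=\tfrac{2}{\sqrt{S}}\phi(y/\sqrt{S})$ in case (i)), and $A_M\le 1$, one gets $\mu_M:=\sup_{y\ge c_M}|A_M'(y)|\lesssim Me^{-c_M^2/(2S)}\asymp e^{(2S\log M)^{1/2}}=o(M^{v})$ for the exponent $v>0$ of~\eqref{coord}, hence $\sup_y|A_M(y\pm\gamma_{n,M})-A_M(y)|\le\gamma_{n,M}\mu_M=M^{-v}\mu_M\,V_{n,M}(v)\le V_{n,M}(v)$ for large $M$. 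Combining, $\bigl|[\P\{\zeta\le x\pm\gamma_{n,M}\}]^M-A_M(x)\bigr|\lesssim M^{-\theta}+V_{n,M}(v)$ whenever $x\pm\gamma_{n,M}\ge c_M$.

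\emph{The lower tail $x<c_M$ and assembly.} For $x<c_M$ we have $A_M(x)=0$, so only $\P\{\sqrt{n/M}\RR_n\le x\}$ must be shown small; by monotonicity and Proposition~\ref{prop1}, $\P\{\sqrt{n/M}\RR_n\le x\}\le\P\{\sqrt{n/M}\RR_n\le c_M\}\le[\P\{\zeta\le c_M+\gamma_{n,M}\}]^M+\C_1 n^{-\kappa}$, and since $Mq(c_M+\gamma_{n,M})\ge Mq(c_M)-\gamma_{n,M}\mu_M\to\infty$ faster than any power of $\log M$, the bracket is $\le M^{-\theta}$ for large $M$; the narrow window $x\in[c_M,c_M+\gamma_{n,M}]$ is handled the same way, using in addition $A_M(x)\le A_M(c_M)+\gamma_{n,M}\mu_M$ and $A_M(c_M)\le M^{-\theta}$. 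Feeding the previous two steps into the sandwich of Proposition~\ref{prop1} already gives~\eqref{amm} with $\bar c_2 n^{-\kappa}$ from Proposition~\ref{prop1}, $\bar c_1 M^{-\theta}$ from the power-to-exponential estimate, and $\bar c_3 V_{n,M}(v)$ from the shift. The remaining term $\bar c_4\,n^{1/2}M^{-\beta-1/2}$ records the H\"older-$\beta$ regularity of $p$ at resolution $M^{-1}$: inside each block $I_m$ one replaces the $\sqrt{p(\cdot)}$ normalizing $\RR_n$ by a locally constant value in order to identify the per-block Gaussian limit with the $p$-free process $\Upsilon$, a replacement of relative cost $O(M^{-\beta})$ which the scaling $\sqrt{n/M}$ inflates to $O(n^{1/2}M^{-\beta-1/2})$; this is the one place where the clean statement of Proposition~\ref{prop1} has to be refined near the point of maximal per-block variance, and it is also exactly the order at which the deterministic bias enters part (ii).

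\emph{Part (ii) and the main obstacle.} Since $\E\hat{p}_n=\Pi p$ is the $L^2$-projection of $p$ onto the space of piecewise degree-$J$ polynomials subordinate to $\{I_m\}$, and on each $I_m$ the best degree-$J$ approximation has sup-error $\le|p|_\beta|I_m|^\beta\le HM^{-\beta}$ (using $\beta\le 1$) while $\Pi$ is bounded on $L^\infty(I_m)$ with a constant independent of $M$ (the rescaling~\eqref{psij} is norm-consistent), we obtain $\|\E\hat{p}_n-p\|_\infty\lesssim HM^{-\beta}$, hence $|\sqrt{n/M}\DDD_n-\sqrt{n/M}\RR_n|\le q^{-1/2}\sqrt{n/M}\,\|\E\hat{p}_n-p\|_\infty\lesssim n^{1/2}M^{-\beta-1/2}$ almost surely. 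Sandwiching $\P\{\sqrt{n/M}\DDD_n\le x\}$ between $\P\{\sqrt{n/M}\RR_n\le x\pm Cn^{1/2}M^{-\beta-1/2}\}$, applying part (i), and using $|A_M(x\pm Cn^{1/2}M^{-\beta-1/2})-A_M(x)|\le C\mu_M n^{1/2}M^{-\beta-1/2}$, turns~\eqref{amm} into the same estimate for $\DDD_n$. Finally, for $M=\lfloor n^\lambda\rfloor$ one has $\gamma_{n,M}\asymp n^{-(1-\lambda)/2}\log n+n^{-\lambda/2}(\log n)^{1/2}$, so~\eqref{coord} holds for every small enough $v>0$ and $V_{n,M}(v)$, $M^{-\theta}\asymp n^{-\lambda\theta}$, $n^{-\kappa}$ are all polynomially small, while $n^{1/2}M^{-\beta-1/2}\asymp n^{(1-\lambda)/2-\lambda\beta}$ (up to the sub-polynomial factor $\mu_M=e^{O((\log n)^{1/2})}$) is polynomially small precisely because $\lambda>(2\beta+1)^{-1}$ is equivalent to $\lambda\beta-(1-\lambda)/2>0$; taking $\gamma$ to be the minimum of the four exponents proves (ii). The main obstacle is the asymptotic bookkeeping around the threshold $c_M$: the divergent but sub-polynomial factors $Me^{-c_M^2/(2S)}\asymp e^{(2S\log M)^{1/2}}$ (and the analogous $\mu_M$) must be repeatedly absorbed into $M^{-\theta}$ and $V_{n,M}(v)$ while keeping every bound uniform in $x\in\R$, in particular across the narrow window where $A_M$ rises from (near) $0$ to (near) $1$; a secondary difficulty is extracting the genuine $M^{-\beta}$ effect of the H\"older regularity of $p$ with constants uniform over $\PP_{q,H,\beta}$.
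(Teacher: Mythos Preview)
Your approach is essentially the paper's: Proposition~\ref{prop1}'s sandwich, Corollary~\ref{cc} to convert $[\P\{\zeta\le y\}]^M$ into $A_M(y)$ with error $O(M^{-\theta})$ via $Mq(c_M)^2+Me^{-c_M^2(1+\chi)/(2S)}\lesssim M^{-\theta}$, a mean-value bound $|A_M(y\pm\gamma_{n,M})-A_M(y)|\le \mu_M\gamma_{n,M}$ with $\mu_M\lesssim e^{(2S\log M)^{1/2}}=o(M^v)$ (this is exactly Lemma~\ref{lemapp} in the paper), the lower-tail control through $A_M(c_M)=o(M^{-\theta})$, and the bias $\|\E\hat p_n-p\|_\infty\lesssim M^{-\beta}$ for part~(ii).

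One point is misconceived, though not fatal. You attribute the $\bar c_4\,n^{1/2}M^{-\beta-1/2}$ term in part~(i) to an incompleteness in Proposition~\ref{prop1}, claiming it ``has to be refined near the point of maximal per-block variance'' because of the $\sqrt{p(\cdot)}$ normalization. This is wrong: Proposition~\ref{prop1} already absorbs the replacement $\sqrt{p(u)}\to\sqrt{p(x)}$ (the $\L_3\to\L_4$ step in its proof), and its cost sits inside $\gamma_{n,M}$, not in a separate $M^{-\beta}$ term. In the paper's own proof of~(i) only the first three terms appear; the $\bar c_4$ term is redundant for $\RR_n$ and enters genuinely only in Step~3 (part~(ii)) through the deterministic bias. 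Since~\eqref{amm} is an upper bound, carrying an extra positive term is harmless, so your argument is still valid --- but drop the claim that Proposition~\ref{prop1} needs refinement.
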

\begin{proof}
The proof is given in Section~\ref{app4}. 
\end{proof}

Theorem~\ref{thm35} states that under certain assumptions on the basis \(\Psi,\) and the choice \(M=\lfloor n^\lambda \rfloor\) with \(\lambda \in ((2 \beta+1)^{-1},1)\), we have 
\begin{eqnarray}
\label{Pret}
\P \left\{
\sqrt{\frac{n}{M}}
\DDD_{n}
		\leq 
u_M(x)	\right\}
= 
 A_{M}(x) + \eps_{n}(x),
\end{eqnarray}
where \(|\eps_{n}(x)|\leq \bar{c}n^{-\gamma} \) and \(\bar{c}, \gamma\) do not depend on \(x\) and \(p.\)

Let us show how~\eqref{Pret} can be used  for the construction of honest confidence bands. Let us fix some confidence level \(\alpha \in (0,1)\) and 
denote the \((1-\alpha)\)- quantile of the  distribution function \(A_{M}(\cdot)\)  by \(q_{\alpha,M}\). Then
\begin{eqnarray*}
\P \Bigl\{
			\frac{
				\left|
		\hat{p}_{n} (x)  - p (x)  
				\right|
			}
			{	
				\sqrt{p(x)}
			}
			\leq k_{\alpha,M}, \;\; \forall x \in [A,B]
			\Bigr\}= 1 - \alpha +e_{n,M},
\end{eqnarray*}
with \(k_{\alpha,M} :=\sqrt{M/n}\cdot q_{\alpha, M}\) 
and \(e_{n,M} = \eps_{n}(q_{\alpha,M})\). Note that \(e_{n,M}\) converges to zero at polynomial level in \(n\).
Solving the corresponding quadratic inequality with respect to \(\sqrt{p(x)}\), we obtain that 
\begin{multline*}
\C_n(x):= 
\biggl(
\hat{p}_n(x) +
( k^2_{\alpha,M}/2) -
\bigl[
\hat{p}_n(x)k^2_{\alpha,M}
+(k_{\alpha,M}^4/4)
\bigr]^{1/2}, \\
\hat{p}_n(x) +
( k^2_{\alpha,M}/2) +
\bigl[
\hat{p}_n(x)k^2_{\alpha,M}
+(k_{\alpha,M}^4/4)
\bigr]^{1/2}\biggr)
\end{multline*}
is a \((1-\alpha)-\)confidence band for \(p(x)\). This confidence set is honest to the class \(\mathcal{P}_{q,H}\) for any positive \(q,H\), \(\beta \in (0,1]\), at polynomial rate with respect to both \(n\) and \(M.\)

\subsection{Numerical example} \label{exex}
Let us consider the density 
\begin{eqnarray}\label{BS}
 p(x) = \frac{1}{2}\phi_{(0,1)}(x) + \frac{1}{10} \sum_{j=0}
 ^4 \phi_{((j/2)-1, 1/100)}(x),
\end{eqnarray}
where \(\phi_{(\mu,\sigma^2)}\) stands for the normal distribution with mean \(\mu\) and variance \(\sigma^2.\) This density function is known as "the claw" \cite{MW} or "Bart Simpson density" \cite{W2006} - the origins of these names are clear from the plot of \(p\), see Figure~\ref{Fig6}. We simulate a sample \(X_1,...,X_n\)  from this density, and aim to construct a confidence band for \(p.\)

\begin{figure*}
\begin{center}
\includegraphics[width=1\linewidth ]{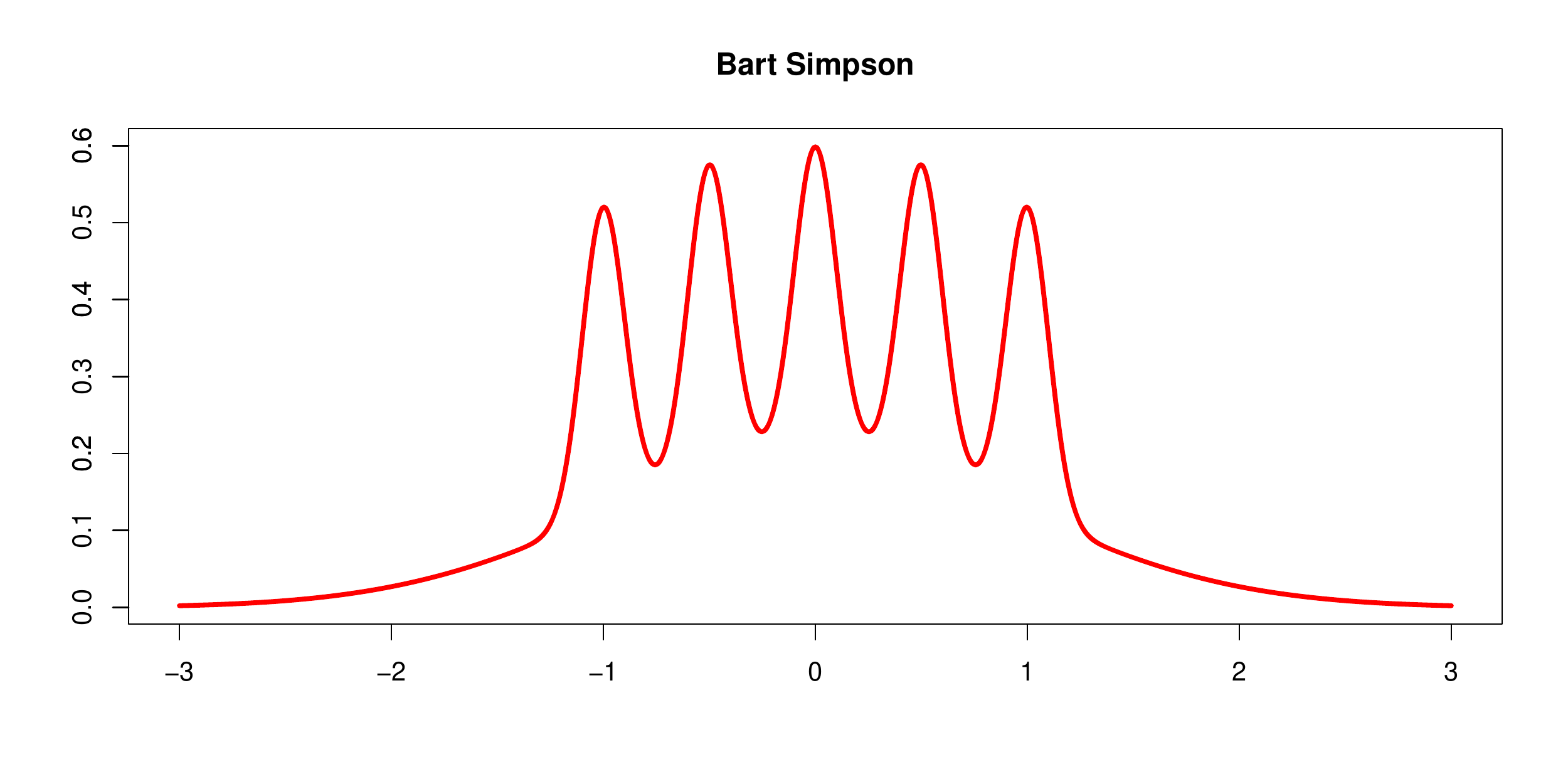}\caption{ Plot of the density function~\eqref{BS}. } \label{Fig6}
\end{center}
\end{figure*}

Let us consider the  orthonormal basis on \(L^2([-3,3]),\) defined as 
\(\tilde{\psi}_j(x) = 3^{-1/2} \psi_j(x/3),\) where \(\psi_j, j=0,1,2,...\) are the  Legendre polynomials on \([-1,1],\) see Section~\ref{exmain}. 
Following the scheme described in Section~\ref{coll}, we divide the interval \([-3,3]\) into \(M\) subintervals \(I_m:=[a_m, b_m]\) of the same length \(\delta=6/M\), and 
define the projection estimate 
\begin{eqnarray*}
\hat{p}_n(x) &=& 
\frac{1}{n}
\sum_{j=0}^J 
\Bigl[
\sum_{i: X_i \in I_m}
\tilde{\psi}_j^{(m)}(X_i)
\Bigr]
\tilde{\psi}_j^{(m)}(x), \qquad x \in I_m,
\end{eqnarray*}
where for \(x \in I_m,\)
\begin{eqnarray*}
\tilde\psi_j^{(m)}(x) = 
\sqrt{M/3} \cdot
\psi_j \Bigl(
	\frac{M(x-a_m)}{3} -1
\Bigr)
, \quad  m=1..M,\;\;  j=0,1,...,J.
\end{eqnarray*}
The corresponding Gaussian process  is defined by~\eqref{zeta}  as 
\begin{eqnarray*}
\Upsilon(x) = 3^{-1/2} \sum_{j=0}^J \psi_j (x/3) Z_j,
 \qquad x \in [-3,3],
\end{eqnarray*}
where \(Z_0, Z_1,...\) are i.i.d. standard normal random variables. 
Following the results of Section~\ref{exmain}, we get  that  the variance of \(\Upsilon(x)\) attains its maximum, which is equal to \(S=(J+1)^2/6,\) in 2 points, namely in \(t=-3\) and \(t=3\), and 
\begin{eqnarray*}
\pp_1(u) = \pp_2(u) = 2 \P \Bigl\{
\Upsilon(1) \geq \sqrt{3} u
\Bigr\}=
2 \Bigl(
 1 - \Phi\bigl(
\sqrt{6} u /(J+1)
 \bigr)
\Bigr).
\end{eqnarray*}
The sequence of accompanying laws defined in~Theorem~\ref{thm35}  is equal to 
\begin{equation}
\label{amy}
A_{M}(x):=\begin{cases}
\exp\Bigl\{
-4M \Bigl(
 1 - \Phi\bigl(
 	\sqrt{6} x / (J+1)
 \bigr)
\Bigr)
\Bigr\}, 
&\mbox{if }x\geq c_{M}, \\ 
0,  &\mbox{if }x<c_{M},%
\end{cases}   
\end{equation}
where \[c_M = \frac{ (J+1)}{\sqrt{3}}  \sqrt{\log(M)} - \frac{(J+1)^2}{6}.\]
According to Theorem~\ref{thm35}, \(A_M(x)\) is close to the distribution function of \begin{eqnarray*}
\DDD_{n} = \sup_{u \in \R} 
\frac{|
\hat{p}_n(u) -  p(u)|
}{
\sqrt{p(u)}
}.
\end{eqnarray*}
The closeness of these functions is illustrated by Figure~\ref{FF}. In fact, as \(n\) grows, the distribution function of \(\sqrt{n/M_n} \DDD_n\) converges to \(A_M(x)\)
\begin{figure*}
\begin{center}
\includegraphics[width=1\linewidth ]{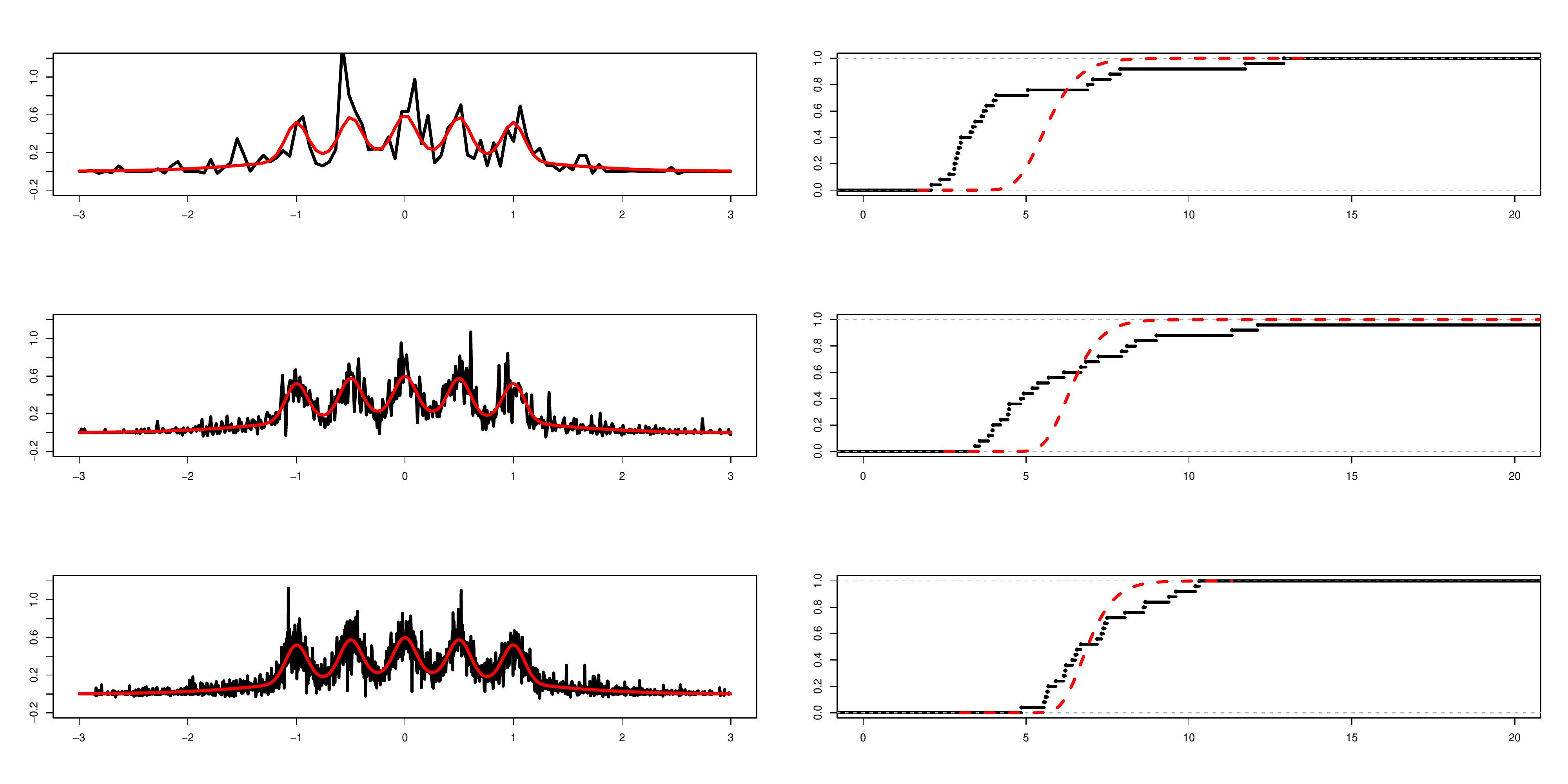}\caption{ 
First raw:  projection density estimates (black solid lines) in comparison with the true densities (red lines) based on \(n =500, 3000, 10000\) simulations. In this example, we take \(M = \lfloor n^{2/3} \rfloor.\) Second raw: empirical distribution functions of \(\sqrt{n/M_n} \cdot\DDD_n\) (black solid lines) based on 25 simulation runs in comparison with the distribution function \(A_M(x)\) (red dashed curves). Clearly, when \(n\) grows, the difference between these distribution functions decays.} \label{FF}
\end{center}
\end{figure*}


\section{Proofs} \label{proofs}
\subsection{Proof of Theorem~\ref{thm2} and Corollary~\ref{cc}}\label{main}
The proof of Theorem~\ref{thm2} consists of 3 steps. 

\textbf{Step 1. } 
Let  us assume for brevity that $S=1.$ Due to Theorem~8.1 from \cite{Piterbarg},
\begin{eqnarray}
\label{ext}
P_{u}(|X|,[A,B]\setminus M(\delta))\leq C(B-A)e^{-u^{2}(1+\delta)/2},
\end{eqnarray}
where \(P_u\) is defined by~\eqref{question}, and the constant $C$ depends on the maximum of the variance of $X'_t.$ Therefore, 
\begin{equation}
P_{u}(|X|,[A,B])=P_{u}(|X|,M(\delta))+O\left(  e^{-u^{2}(1+\delta)/2}\right)
,\ \label{opt1}%
\end{equation}
as $u\rightarrow\infty.$
 Next, since the process \(X_t\) is centred, 
\begin{multline*}
P_u(|X|,M(\delta))     =2P_u(X,M(\delta))
-\P\bigl\{\max_{t\in M(\delta)}X(t)    \geq u,\max_{t\in M(\delta)}(-X(t))\geq u\bigr\}.
\end{multline*}
The last probability in the r.h.s. can be majorized by 
\[
P_{2u}\left(  X(t)-X(s),M(\delta)\times M(\delta)\right)  ,
\]
which can be  bounded due to Theorem~8.1 from  \cite{Piterbarg} by 
\begin{align}
&  C(B-A)^{2}ue^{-4u^{2}/(2S_{1})},\text{ with }\label{opt2}\\
&  \ S_{1}=\max_{M(\delta)\times M(\delta)}(\sigma^{2}(s)+\sigma
^{2}(t)-2r(s,t))\leq2-2\min_{M(\delta)\times M(\delta)}r(s,t),\nonumber
\end{align}
where \(r(s,t)\) is the covariance function of the process \(X(t).\) Combining the formulas from this step, we arrive at 
\begin{equation}
P_{u}(|X|,[A,B])=2P_u(X,M(\delta))+O\left(  e^{-u^{2}(1+\chi)/2}\right)
,\ \label{optopt}%
\end{equation}
where
\begin{equation}
\chi<\min\left\{  \delta,\frac{1+\min_{M(\delta)\times M(\delta)}%
r(s,t)}{1-\min_{M(\delta)\times M(\delta)}r(s,t)}\right\} := \chi_1(\delta). \label{chi0}%
\end{equation}
\textbf{Step 2.}
Let us now concentrate on the case 
(i). In what follows, we assume that \(\delta>0\) is small enough and the set \(M(\delta)\) is an interval denoted below by \([a,b]\).  Denote \[ r_{kl}(s,t):=\frac{\partial^{k+l}r(s,t)}{\partial^{k}s\partial^{l}t}, \] e.g., $r_{10} (s,t)=\partial r(s,t)/\partial s.\ $ 

Let us assume that $t_{0}=A$ and $\sigma^{\prime}(t_{0})\ne0.$   In this case,
$\sigma^{\prime}(t_{0})<0.$ The proof for another case 
$t_{0}=B$ (with $\sigma^{\prime}(t_{0})>0$) follows from the time inversion argument. As we have already mentioned in Remark~\ref{rem2}, the up-crossings are transformed into the  down-crossings. The covariance between $X(t_{0})$ and $X^{\prime}(t_{0})$
is negative,
\[
r_{10}(t_{0},t_{0})=\frac{1}{2}\left.  \frac{d\sigma^{2}(t)}{dt}\right\vert
_{t=t_{0}}<0,
\]
and therefore we can assume that $\delta$ is such that \begin{equation}\label{r10}
\max_{t\in M(\delta)}r_{10}(t,t)<0. 
\end{equation}
We have 
\begin{equation}
P_u(X,M(\delta))=P(X(A)\geq u)+P(X(A)<u,N_{u}^{+}%
(M(\delta))\geq1).\nonumber
\end{equation}
Trivially,
\[
P(X(A)<u,N_{u}^{+}(M(\delta))\geq1)\leq EN_{u}^{+}(M(\delta)),
\]
where the expectation in the right-hand side can be calculated via  (\ref{EN}). Let us represent the density 
 $p_{t}(u,x)$
via the conditional density $X^{\prime
}(t)$ under the condition
$X(t)=u$:
\begin{equation} p_{t}(u,x)=\frac{1}{\sqrt{2\pi}\sigma(t)}e^{-\frac{u^{2}}{2\sigma^{2}(t)}}p_{X^{\prime }(t)|X(t)=u}(x). \label{dens} \end{equation}
The mean and the variance of conditional distribution are equal to 
\begin{equation}
m(t):=u\frac{r_{10}(t,t)}{r(t,t)},\qquad d^{2}(t):=r_{11}(t,t)-\frac{r_{10}%
^{2}(t,t)}{r(t,t)}. \label{dens1}%
\end{equation}
Next, by changing the variables 
$y=(x-m(t))/d(t)$ we get 
\[
\int_{0}^{\infty}xp_{X^{\prime}(t)|X(t)=u}(x)dx=\frac{1}{\sqrt{2\pi}}%
\int_{-m(t)/d(t)}^{\infty}(d(t)y+m(t))e^{-y^{2}/2}dy,
\]
and, since due to~\eqref{r10} we have
$m(t)<0$ for any \(t \in \M(\delta)\), it holds

\[
EN_{u}^{+}(M(\delta))\leq\frac{1}{2\pi}\int_{M(\delta)}\frac{d(t)}{\sigma
(t)}\exp\left(  -\frac{u^{2}}{2}\left(  \frac{1}{\sigma^{2}(t)}+\frac
{m^{2}(t)}{u^{2}d^{2}(t)}\right)  \right)  dt.
\]
Due to (\ref{dens1}), $d^{2}(t)\leq r_{11}(t,t),$ and moreover $r(t,t)=\sigma^{2}(t)\leq1.$ Therefore, 
\[
\frac{1}{\sigma^{2}(t)}+\frac{m^{2}(t)}{u^{2}d^{2}(t)}\geq1+\frac{r_{10}%
^{2}(t,t)}{r_{11}(t,t)},
\]
and
\[
EN_{u}^{+}(M(\delta))\leq Ce^{-u^{2}(1+\chi_{2})/2}
\]
with \[
\chi_{2}=\chi_{2}(\delta):=\min_{t\in M(\delta)}\frac{r_{10}^{2}%
(t,t)}{r_{11}(t,t)}>0.
\]
So we get one more condition on the constant \(\chi\) for the case (i):
\begin{equation}
\chi<\chi_{2}(\delta). \label{chi00}%
\end{equation}
\textbf{Step 3.} Now let us turn towards to the proof of  (ii) and (iii). Let us assume that the number $\delta$ is small enough to guarantee that the set $M(\delta)$ satisfies the conditions 
\[
\max_{t\in M(\delta)}\frac{d^{2}}{dt^{2}}\sigma^{2}(t)\leq0\ \text{and\ }%
\min_{t\in M(\delta)}r_{11}(t,t)>0.
\]
Such $\delta$ exists due to \eqref{der} and \eqref{dens1}. We have 
\begin{multline}
P_{u}(X,[a,b])  =P(X(a)\geq u)+P(X(a)<u,N_{u}^{+}([a,b])=1)\\
+P(X(a)   <u,N_{u}^{+}([a,b])\geq2)=:P(X(a)\geq u)+p_{1}+p_{2}.
\label{est1}%
\end{multline}
There exists at least one down-crossing  of the level \(u\) between two up-crossings of the level \(u\), and therefore 
\begin{align}
p_{2}  &  \leq P(N_{u}^{+}([a,b])\geq2)\nonumber\\
&  \leq P(\exists t_{1},t_{2}\in\lbrack a,b]:t_{2}>t_{1},X(t_{i})=u,X^{\prime
}(t_{i})\geq0,i=1,2)\nonumber\\
&  =P(\exists\tau\in(a,b):X(\tau)=u,X^{\prime}(\tau)<0,\max_{[\tau
,b]}X(s)>u). \label{p_2}%
\end{align}
Let us denote the number of such points  $\tau$ by
$N_{u}^{-+}([a,b]).$ So, we have proved that \(p_2 \leq \E N_{u}^{-+}([a,b]).\)

Let us show that   
\begin{eqnarray}
\label{p1} p_1 = \E N_{u}^{+}([a,b]) + O( \E N_{u}^{-+}([a,b])).\end{eqnarray} In fact, 
\begin{multline*}
EN_{u}^{+}([a,b])  = P(X(a)<u,N_{u}^{+}([a,b])=1)\\
+ P(X(a)  > u,N_{u}^{+}([a,b])=1)+\sum_{k=2}^{\infty}kP(N_{u}%
^{+}([a,b])=k),
\end{multline*}
and therefore 
\begin{multline}\label{estp_1}
p_{1}= EN_{u}^{+}([a,b])-P(X(a)\geq u,N_{u}^{+}([a,b])=1)\\
  -\sum_{k=2}^{\infty}kP(N_{u}^{+}([a,b])=k).
\end{multline}
We have 
\begin{eqnarray*}
P(N_{u}^{+}([a,b])=1) 
&\leq& \E [N_{u}^{-+}([a,b])],\\ 
\sum_{k=2}^{\infty}kP(N_{u}^{+}([a,b])=k)
&=& 
\sum_{k=2}^{\infty}kP(N^{-+}([a,b])=k-1)\\
&\leq& 
2\sum_{k=2}^{\infty} (k-1)  P(N^{-+}([a,b])=k-1)\\
 &\leq&
2  \E [N_{u}^{-+}([a,b])],
\end{eqnarray*}
and therefore we get~\eqref{p1}.

The method described in  the proof of Theorem~E.1 from~\cite{Piterbarg}  allows to calculate the mean value of the points $\tau$, which are the down-crossings of the level $u$ and satisfy
\[
\max_{s\in\lbrack\tau,b]}X(s)\geq u.
\]
We have
\begin{multline}
  EN_{u}^{-+}([a,b])\label{est3n}\\
  =\int_{a}^{b}dt\int_{-\infty}^{0}|x|dxp_{t}(u,x)P(\max_{s\in\lbrack
t,b]}X(s)>u|X(t)=u,X^{\prime}(t)=x),
\end{multline}
where the density $p_{t}(u,x)$ is given by (\ref{dens}). The formulas of this type are obtained by using the discretisation of time  $\{k2^{-n}\in M(\delta ),k,n\in\mathbb{Z}\}$ and taking the limit as   $n\rightarrow\infty,$ see \cite{Piterbarg}. 


Let us fix $\alpha>0$ (the exact value will be clarified later), and then decompose the integral in (\ref{est3n}) into two: over the set  $[a,b]\times (-\infty,-\alpha u]$ and over its compliment,  $[a,b]\times(-\alpha u,0].$ In the first integral we bound the probability by 1, and the further analysis consists only in the analysis of the two-dimensional Gaussian density by using (\ref{dens}, \ref{dens1}). For the second summand, we firstly estimate the sum of coefficients corresponding to  $(-u^{2})$ under exponent in the two-dimensional density and at the point of maximum of conditional variance of conditional process. Then we use the Laplace method. 

So,  the integral over the set $[a,b]\times (-\infty,-\alpha u]$ is upper bounded by 
\begin{equation}
\frac{1}{\sqrt{2\pi} \sigma(t)}\int_{a}^{b}e^{-\frac{u^{2}%
}{2\sigma^{2}(t)}}dt\int_{-\infty}^{-\alpha u}|x|dxp_{X^{\prime}%
(t)|X(t)=u}(x).\label{chi2}%
\end{equation}
For the conditional mean (\ref{dens1})
we have 
\begin{equation}
|m(t)|\leq u\max_{t\in M(\delta)}\frac{|r_{10}(t,t)|}{r(t,t)}=:c(\delta
)u.\label{c(de)}%
\end{equation}
Next, for 
$\alpha>c(\delta)$ we obtain 
\begin{align}
\int_{-\infty}^{-\alpha u}|x|dxp_{X^{\prime}(t)|X(t)=u}(x) &  \leq\frac
{1}{\sqrt{2\pi}d(t)}\int_{-\infty}^{-(\alpha-c(\delta))u}e^{-\frac{x^{2}%
}{2d^{2}(t)}}dx\label{alpha}\\
&  \leq Ce^{-\frac{(\alpha-c(\delta))^{2}u^{2}}{2d^{2}(t)}}.\nonumber
\end{align}
Finally we get the restriction on  $\chi$ 
for the cases (ii, iii):
\begin{equation}
\chi<\min_{t\in M(\delta)}\left(  \frac{1}{\sigma^{2}(t)}+\frac{(\alpha
-c(\delta))^{2}}{d^{2}(t)}\right)  -1.\label{chi11}%
\end{equation}
(recall that  $\sigma^{2}(t_{0})=1).$

Let us remark that $\alpha$ can be taken as small as needed. In fact, one can take \(\delta\) small enough and use that $r_{10}(t,t)\rightarrow0$ as $t\rightarrow t_{0}.$

For $x\in\lbrack-\alpha u,0]$ we consider the conditional probability under the integral  (\ref{est3n}). We get for $s>t$ and some (random) point  $t_{s}\in\lbrack t,s],$
\[
X(s)=X(t)+X^{\prime}(t)(s-t)+\frac{1}{2}X^{\prime\prime}(t_{s})(s-t)^{2}.
\]
Denote for brevity
\[
\mathcal{X}:=\{X(t)=u,X^{\prime}(t)=x\}.
\]
The conditional probability in (\ref{est3n}) is equal to
  \begin{multline} P\Bigl\{ \max_{s\in\lbrack t,b]} \bigl( X(t)+X^{\prime}(t)(s-t)+\frac{1}{2}%
X^{\prime\prime}(t_{s})(s-t)^{2}
\bigr) \geq u|\mathcal{X}\Bigr\}\nonumber \\
  =P\Bigl\{ 
  \max_{s\in\lbrack t,b]}
 \bigl( x+\frac{1}{2}%
X^{\prime\prime}(t_{s})(s-t)\bigr)\geq0|\mathcal{X}
\Bigr\}
  \leq P\Bigl\{ 
  \max_{s\in\lbrack t,b]}X^{\prime\prime}(s)\geq0\ |\mathcal{X}
  \Bigr\}%
,\label{est4.}%
\end{multline}
where we used that   $t_{s}\in\lbrack t,b]$ and $x\leq0.$ 
Denote by  $v^{2}(s;t)$ and $m(s,u,x;t)$ the conditional variance and conditional mean of the process $X^{\prime\prime}%
(s)$. The last probability can be represented as 
\begin{equation}
P\left\{  \left.  \exists s\in\lbrack t,b]:\frac{X^{\prime\prime
}(s)-m(s,u,x;t)}{v(s;t)}\geq-\frac{m(s,u,x;t)}{v(s;t)}\right\vert
\mathcal{X}\right\}.\label{est55}%
\end{equation}
Let us find a lower bound for \(- m(s,u,x;t) / v(s;t).\) Taking into account that \(\delta\) is small enough, we will simplify the expressions for the conditional mean and conditional variance as $s=t=t_{0}.$ Since $r(t_{0},t_{0})=1,$ $r_{10}(t_{0},t_{0})=0,$ we get
\begin{eqnarray*}
v^{2}(t_{0};t_{0})&=&r_{22}(t_{0},t_{0})-r_{02}^{2}(t_{0},t_{0})-\frac
{r_{12}^{2}(t_{0},t_{0})}{r_{11}(t_{0},t_{0})},\\ 
m(t_{0},u,x;t_{0}) &=& r_{02}(t_{0},t_{0})u+\frac{r_{12}(t_{0},t_{0})}%
{r_{11}(t_{0},t_{0})}x.
\end{eqnarray*}
It would be a worth mentioning that  $r_{02}(t_{0},t_{0})<0$, because 
\[
\left(  \sigma^{2}(t_{0})\right)  ^{\prime\prime}=2(r_{11}(t_{0},t_{0}%
)+r_{02}(t_{0},t_{0}))\leq0,
\]
and  $r_{11}(t_{0},t_{0})>0$ due to~\eqref{der}. Note that 
\[
-\frac{m(t_{0},u,x;t_{0})}{v(t_{0};t_{0})}=\frac{u}{v(t_{0};t_{0})}\left(
-r_{02}(t_{0},t_{0})-\frac{r_{12}(t_{0},t_{0})}{r_{11}(t_{0},t_{0})}\frac
{x}{u}\right)  .
\]
Taking into account that $|x|\leq\alpha u,$ let us choose $\delta$ in (\ref{c(de)}) small enough to guarantee the existence of  $\alpha$ such that 
\[
\frac{|r_{12}(t_{0},t_{0})|\alpha}{r_{11}(t_{0},t_{0})}\leq-\frac{1}{2}%
r_{02}(t_{0},t_{0}).
\]
Note that for \(\delta\) small enough
\begin{equation}
\min_{s,t\in\lbrack a,b]}-\frac{m(s,u,x;t)}{v(s;t)}\geq-\frac{1}{2}%
\frac{r_{02}(t_{0},t_{0})u}{v(t_{0};t_{0})}.\label{gamma11}%
\end{equation}
Due to the  Borel-TIS inequality (Theorem~2.1.1 from~\cite{AT}), the probability in  (\ref{est55}) doesn't exceed 
\begin{multline*}
P\left(  \left.  \max_{s\in\lbrack t,b]}\frac{X^{\prime\prime}%
(s)-m(s,u,x)}{v(s;t)}\geq-\frac{ur_{02}(t_{0},t_{0})}{4v(t_{0},t_{0}%
)}\right\vert \mathcal{X}\right)  \\
  \leq C\exp\Bigl\{  -\frac{1}{2}\bigl(  \frac{u|r_{02}(t_{0},t_{0}%
)|}{4v(t_{0},t_{0})}-c\bigr)^{2}\Bigr\},
\end{multline*}
where the constant $c$ doens't depend on $u.$
Finally, we conclude that 
\begin{equation}
\chi<\min\{\delta,\alpha^{2},\gamma_{2}\}.\label{chi_final}%
\end{equation}
where 

\begin{itemize}
\item $\delta$ is defined by (\ref{opt1});

\item $\alpha$ is defined by  (\ref{chi2}), (\ref{chi11});
\item \(\gamma_2\) is defined by 
\begin{equation}
\gamma_{2}:=\frac{r_{02}^{2}(t_{0},t_{0})}{16v^{2}(t_{0},t_{0})}%
.\label{gamma13}%
\end{equation}
\end{itemize}
Next, the first probability in the right-hand side of 
(\ref{est1}) has the same exponential order as $u\rightarrow\infty$  as the mathematical expectation of the number of up-crossings. This fact combined with the equality 
 (\ref{estp_1}) and  the further lines of reasoning, yields   (\ref{pmax00}).

  In the case, when the point of maximum of variance lies inside the interval $[A,B]$ or coincides with the point \(B\), the first term in  (\ref{est1}) has the same order as the remainder, provided~\eqref{der} holds. Indeed, following the same lines, we get 
 \begin{eqnarray}
\pp(u)=2\P\Bigl\{X(a%
)\geq u\Bigr\}+2\E \left[ N_{u}^{+}(\M(\delta))\right]\label{pmax00}.%
\end{eqnarray}
Since for some \(C_1>0\),
  \begin{eqnarray*}
\E \bigl[
N^+_u(\M(\delta)) 
\bigr]
\sim \frac{C_1}{u} e^{-u^2/(2S)}, \qquad u \to \infty, 
\end{eqnarray*}
 see \cite{PS},  and
  \begin{eqnarray*}
\qquad \P\{X(a) >u\} \sim \frac{1}{\sqrt{2 \pi}  \sigma(a) u} e^{-u^2/(2\sigma^2(a))}, \qquad u \to \infty,
\end{eqnarray*}
 we arrive at  (\ref{pmax}). 
 \begin{remark}
It would be a worth mentioning that our proof of~(\ref{pmax}) essentially differs from the proof of the Rice formula in~\cite{Piterbarg}, \cite{Piterbarg20}. In our proof, the existence of the second factorial moment is not used, and this fact allows us to consider milder conditions  on the process \(X(t)\).
\end{remark} 
\begin{remark}
 Let us recall that we used the assumption \(S=1.\) For general \(S\), the expression for \(\chi\) remains the same, but the computations should be repeated for the level 
 $u/\sqrt{S}$ of the process $X(t)/\sqrt{S}.$ 
 \end{remark}

Finally, let us mention that the proof of Corollary~\ref{cc} is based on the trivial inequality
\[
\P\Bigl\{ \max_{t\in \M_{1}}X(t)\geq u,\ \max_{t\in \M_{2}}X(t)\geq u
\Bigr\} 
\leq
\P\Bigl\{
\max_{(s,t)\in \M_{1}\times \M_{2}}\bigl( X(s)+X(t) \bigr)\geq2u
\Bigr\},
\]
and further application of the ideas presented in  Theorem~8.1 from \cite{Piterbarg}.

\subsection{Proof of Proposition~\ref{prop1}}\label{App1}
The idea of the proof presented below was used in our paper~\cite{KonakovPanov}. There exists also an extended version of this paper, published as a preprint~\cite{panov2014a}, where some issues are discussed in more details.

The proof consists of 6 steps. For any function \(G\) and any positive number \(h\) (probably depending on \(n,M,J,x\)), we define 
\begin{eqnarray*}
\L_{n, M, J} (h, G; x) = \L (h, G; x)
 := h  \sum_{m=1}^M \sum_{j=0}^J 
\left[ 
\int_{I_m} \psim(u) dG(u) 
\right]
 \psim(x).
\end{eqnarray*}

\textbf{1. } \textit{Koml{\'o}s-Major-Tusnady construction.}
Denote the empirical process of a uniform on \([0,1]\) random sample \(F(X_1),..,F(X_n)\) by 
\[	
	\RR_{n}(x) := \sqrt{n} \Bigl( 
		 \frac{1}{n} \sum_{i=1}^{n }\I \left\{ 		 	F( X_i)
 \leq x
		\right\} - 
x
	\Bigr).
\]
Note that  \[
\RR_{n}(F(x)) = \sqrt{n} \left(\hat{F}_n(x) - F(x) \right),\] where \(\hat{F}_n(x) = n^{-1} \sum_{k=1}^n \I\{ X_i \leq x\} \)
is the empirical distribution function.

Due to the well-known Koml{\'o}s-Major-Tusnady construction (see \cite{KMT}),  there exists a version of \(\RR_n(x)\) (denoted below also by \(\RR_n(x)\) for the sake of simplicity), a sequence of Brownian bridges \(\mathcal{B}_n(x)\), and  some positive constants \(c_1, c_2, \kappa\) such that 
\begin{eqnarray}
\label{Wstar}
\P \left\{
		\sup_{y \in [0,1]} \left| 			\RR_{n}(y) - \B_{n}(y) 
		\right| 
		\leq
 c_{1} \frac{\log(n)}{\sqrt{n}}
	\right\}
	> 
1 - \frac{
   c_2
}{
  n^{\kappa}
}.
\end{eqnarray}
Note also that under our assumptions  \(F\) is a continuous function, and therefore the event in the left-hand side of \eqref{Wstar}  is equal to  
\begin{eqnarray*}
\Omega^{*}_{n}:=
\left\{
		\sup_{x \in \R} \left| 			\RR_{n}(F(x))- \B_{n}(F(x)) 
		\right| 
		\leq
 c_{1} \frac{\log(n)}{\sqrt{n}}
	\right\}.
\end{eqnarray*}
In what follows, we denote 
\begin{eqnarray*}
\L_1(x) = \L(h, \RR_n(F(\cdot)); x), 
\qquad 
\L_2(x) = \L(h, \B_n(F(\cdot)); x).
\end{eqnarray*}
%
\textbf{2.} \textit{Supremum of the functional \(\L\); \(\L_1 \too \L_2\).} 
Let us show that 
\begin{eqnarray}
\label{w}
	\sup_{x \in [A,B]} \left| \L_{n, M, J} \Bigl(h, G; x \Bigr) \right|
	\leq 
	 c_3 h \;M  \;w(G, [A,B], \delta),
\end{eqnarray}
where
\begin{itemize} 
\item  \(w\) is the modulus of continuity, i.e., 
\begin{eqnarray*}
 	w (G, D, \delta) := \sup\Bigl\{ 
		\left| 
			G(u) - G(v)
		\right| : \; 
		u,v \in D, \; |u-v| < \delta
	\Bigr\};
\end{eqnarray*}
\item the constant \(c_3\) is equal to
\[c_3 = \sum_{j=0}^J \Bigl[
d_j + e_j (B-A)^{\alpha}
\Bigr] d_j,
\]
where \(e_j=|\psi_j|_{\alpha}\) is the \(\alpha\)-H\"older coefficient  of the function \(\psi_j,\) and \(d_j = \max_{x \in [A,B]}(\psi_j(x))\).
\end{itemize}
In fact, for \(x \in [A,B]\) there exists  an interval \(I_{m}\) containing \(x,\) and
\begin{eqnarray*}
	\left|
		\L_{n, M, J}(h,G; x) 
	\right|
	&=& 
	\left|
	h \sum_{j=0}^{J} 
	\left[
		\int_{I_m}	\psim (u) \; dG\left(u\right)
	\right] 
	\psim (x)
	\right|
		\\
	&=&
	\left| 
	h \sum_{j=0}^{J} \Bigl[
		\psim (b_m) \Bigl(
			G(b_m) - G (a_m) 
		\Bigr) \Bigr. \right.
		\\&&
		\Bigl.\left.
		\hspace{1cm}
		-
		\int_{I_m} 
			\Bigl(
   G(u)-G(a_m)\Bigr) 
		d\psim(u)
		\Bigr]
		\psim(x)
		\right|\\
		&\leq&
		h \sum_{j=0}^{J} 
		\left( 
			\sup_{x \in I_{m}}|\psim (x) |
			+ 
			V_{I_m} (\psim)
		\right)
		\sup_{x \in I_{m}}|\psim (x) | \\ && \hspace{6cm} \cdot
		w(G, [A,B], \delta)\\
		&\leq& 
		c_3 h \;M \;w(G, [A,B], \delta),
\end{eqnarray*}
where  we denote  by \(V_{I_m}
	(
		\psim
	)
	\) the total variation of \(\psim\), 
	\[
	V_{I_m}
	(
		\psim
	)
	:= \sup_{
			\|P\| \to 0
		} 
		\sum_{i=1}^{n} 
			\left| 
				\psim (x_{i}) - \psim(x_{i-1})
			\right|,\]
\(P\) ranges over the partitions \(a_m=x_{0}<x_{1}<...<x_{n}=b_m\), and  \(\|P\|=\max_{i}|x_{i}-x_{i-1}|\). In fact, for any   \(j \in \N\) and any \(m=1..M,\)
\begin{eqnarray*}
\sup_{x \in I_m} |\psi_j^{(m)}(x)| \leq  M^{1/2} d_j,\qquad
V_{I_m} (\psim) \leq M^{1/2} V_{[A,B]} (\psi_j)\leq  M^{1/2} e_j (B-A)^{\alpha}.
\end{eqnarray*}
From~\eqref{w} it follows that on the event \(\Omega_n^*\),
\begin{eqnarray*}
\left\| \L_1 - \L_2 \right\| 
&=& 
\sup_{x \in \R} \left| \L(h, \RR_n(F(\cdot))-\B_n(F(\cdot)); x)
 \right|\\
 &\leq&
c_4\; h \;M  \frac{\log(n)}{\sqrt{n}}
\end{eqnarray*}
with \(c_4 = 2 c_1 c_{3}.\)

\textbf{3.} \textit{Brownian bridge $ \too $ Brownian motion.}
Denote by \(\W_n\) the Brownian motion corresponding to the Brownian bridge \(\B_n,\) that is, \(\B_n(x) = \W_n(x) - x \W_n(1), \;\; \forall x \in [0,1].\)  Denote \(\L_3(x) = \L(h, \W_n(F(\cdot)); x).\)
\begin{eqnarray*}
\left\| 
  \L_2 -  \L_3
 \right\| &\leq & 
c_4\; h \;M  | \W_n(1) | w(F, [A,B], \delta).
\end{eqnarray*}
Since \(p \in \PP_{q,H}\),  we get
\[w(F(\cdot), [A,B], \delta) \leq  \sup_{|u-v| \leq \delta}
\int_u^v p(x) dx 
\leq 
\sup_{|u-v| \leq \delta}
\int_u^v 
\bigl(
p(u) + H \delta^\beta
\bigr) dx
\lesssim \delta,\]
where we use that \(p(u)\) is uniformly bounded for all \(p \in \PP_{q,H}.\) In fact, denote \(x_\star = \argmin_{x \in [A,B]} p(x);\) since \(p(x_\star) \leq (B-A)^{-1},\) we get from the property of uniform H{\"o}lder continuity
\begin{eqnarray*}
p(u) \leq H|u-x_\star|^\beta+p(x_\star) \leq (H+1)(B-A).
\end{eqnarray*}
Finally,
\begin{eqnarray*}
\P\left\{
\| \L_2 -  \L_3 \| \leq c_5\; h \; x
\right\} \geq \P \left\{ 
|\W_n(1)| \leq x 
\right\} = 1- 2(1-\Phi(x))
\end{eqnarray*}
for any \(x>0\) and some \(c_5>0.\)

\textbf{4.} \textit{$\L_3 \too \L_4(x):= \L(\sqrt{p(x)} h,  \W_n; x).$}
Let us first note that 
\begin{eqnarray*}
\Bigl( 
\W_n \bigl( 
F(u)
\bigr) 
\Bigr)_{u \in \R}
\eqd
\Bigl(
\int_{-\infty}^u \sqrt{p(v)} d\W_n(v)
\Bigr)_{u \in \R},
\end{eqnarray*}
because both processes are Gaussian with zero mean and covariance function \(K(u_1,u_2) = F(\min(u_1, u_2)).\)  Therefore, 
\begin{eqnarray*}
\L_4(x) - \L_3(x)  = h \sum_{m=1}^M \sum_{j=0}^J 
\left[ 
\int_{I_m} \psim(u) \bigl(
\sqrt{p(x)} - \sqrt{p(u)}
\bigr) 
  dW_n(u) 
\right]
 \psim(x).
\end{eqnarray*}
Using the integration by parts formula for Wiener integrals, we get with fixed \(m\)
\begin{multline}\label{ibp}
\Bigl|
\int_{I_m} \psim(u) \bigl(
\sqrt{p(x)} - \sqrt{p(u)}
\bigr) 
  dW_n(u) 
  \Bigr|
  \\=
  \Bigl|
  \psim(b_j) \bigl(
\sqrt{p(x)} - \sqrt{p(b_j)}
\bigr) 
W_n(b_j)
-
  \psim(a_j) \bigl(
\sqrt{p(x)} - \sqrt{p(a_j)}
\bigr) 
W_n(a_j) 
\Bigr.
\\-
\Bigl.
\int_{I_m} W_n(u) d\Bigl(
\psim(u)\left( \sqrt{p(x)} - \sqrt{p(u)}\right)
\Bigr)
\Bigr|\\
\leq 
\sup_{I_m}|W_n(u)| \cdot V_{I_m}\left(
\psim(\cdot) 
\bigl( \sqrt{p(\cdot)} - \sqrt{p(x)}\bigr)\right).
\end{multline}
Note that under our assumptions on the function \(p\), it holds for any \(x_2, x_1 \in I\),
\begin{eqnarray*}
\left| \sqrt{p(x_2)} - \sqrt{p(x_1)}\right|
\leq
\frac{
\left|
p(x_2) - p(x_1)
\right|
}{
 \sqrt{p(x_2)} + \sqrt{p(x_1)}
}
\leq c_5 |x_2 -x_1|
\end{eqnarray*}
for some \(c_5>0.\) Moreover, for any \(u_1, u_2 \in I_m, \)
\begin{multline*}
\Bigl| 
\psim(u_2) 
\bigl( \sqrt{p(u_2)} - \sqrt{p(x)}\bigr)
-
\psim(u_1) 
\bigl( \sqrt{p(u_1)} - \sqrt{p(x)}\bigr)
\Bigr| \\
\leq
\Bigl| 
\psim(u_2) 
\Bigr|
\cdot
\Bigl| 
	\sqrt{p(u_2)} - \sqrt{p(u_1)}
\Bigr|
+
\Bigl|
\psim(u_2) - \psim(u_1) 
\Bigr|
\cdot
\Bigl| \sqrt{p(u_1)} - \sqrt{p(x)}
\Bigr|
\end{multline*} 
Therefore, 
\begin{multline*}
V_{I_m}\left(
\psim(\cdot) 
\bigl( \sqrt{p(\cdot)} - \sqrt{p(x)}\bigr)\right)
\leq 
c_5 \Bigl(
\sup_{u \in I_m} \bigl| 
	\psim(u) 
\bigr|
+
V_{I_m} \bigl(
\psim
\bigr)
\Bigr)\cdot
\bigl| I_m \bigr|
\\
\leq c_6  M^{-1/2}
\end{multline*}
with some \(c_6=c_5 (d_j + e_j(B-A)^{\alpha}) (B-A)\). Combining this result with \eqref{ibp}, we arrive at 
\begin{eqnarray*}
\left\| \L_4(x) - \L_3(x) \right\| 
  \leq 
c_7 h \sup_{[A,B]} |W_n(u)|.
\end{eqnarray*}
Next,  
\begin{eqnarray*}
	\P \Bigl\{
		\sup_{[A,B]}|W_{n}(u)| 
		>
		x
	\Bigr\} 
	&\leq& 
	\P \Bigl\{
		\sup_{[A,B]}W_{n}(u)
		>
		x
	\Bigr\} 
	+
	\P \Bigl\{
		\inf_{[A,B]}W_{n}(u) 
		<
		-x
	\Bigr\} \\
	&\leq&
		2 \P \Bigl\{
		\sup_{[A,B]}W_{n}(u)
		>
		x
	\Bigr\} 
	= 
	 4 \P \Bigl\{
	 	W_{n}(B) >x
	\Bigr\},
\end{eqnarray*}
where the last inequality follows from  the reflection principle for Brownian motion. Therefore, for any \(x>0,\)
\begin{eqnarray*}
\P\left\{
\| \L_3 -  \L_4 \| \leq c_7 \; h \; x
\right\} &\geq& \P \Bigl\{
		\sup_{[A,B]}|W_{n}(u)| 
		<
		x
	\Bigr\}\\
	&\geq& 1-4\Bigl( 
	1- \Phi(x/\sqrt{B})
	\Bigr).
\end{eqnarray*}

\textbf{6.} \textit{Final step.} Note that 
\begin{eqnarray*}
\sup_{x \in \R} \frac{
\left|
\L_4(x) 
\right|
}{\sqrt{p(x)}
}
&=& h
\max_{m=1..M} 
\sup_{x \in \R} \left|
\sum_{j=0}^J 
\left[ 
\int_{I_m} \psim(u)
  dW_n(u) 
\right]
 \psim(x)
\right|\\
&=& h \sqrt{M}
\max_{m=1..M} 
\sup_{x \in I} \left|
\sum_{j=0}^J 
\left[ 
\int_{I} \psi(u)
  dW_n^{(m)}(u) 
\right]
 \psi(x)
\right|,
\end{eqnarray*}
where \(W_n^{(m)}, m=1..M\) are i.i.d. copies of \(W_n.\)
The last formula suggests the choice \(h=M^{-1/2}.\)
To complete the proof, we need the following technical lemma, which is proven in  
\cite{panov2014a}, pp.~33-34. 

\begin{lemma}
\label{final}
	Let \(\eta_{1}, ..., \eta_{k}\) be random variables such that
	\begin{eqnarray*}
		\P \Bigl\{ 
			\left| 
				\eta_{i+1} - \eta_{i}
			\right| 
			\leq \delta_{i}
		\Bigr\} \geq 1 -  \gamma_{i}, \quad i=1..(k-1),
	\end{eqnarray*}
for some non-negative \(\delta_{i}, \gamma_{i}, \; i=1..k\).  Denote by \(F_{\eta_{k}}\) the distribution function of \(\eta_{k}\). Then  it holds
\begin{eqnarray}
	F_{\eta_{1}}\left( 
		x- \sum_{j=1}^{k-1}\delta_{j}
	\right) 
	-
	\sum_{j=1}^{k-1}\gamma_{j}
	\leq 
	F_{\eta_{k}}(x) 
	\leq 
		F_{\eta_{1}}\left( 
		x	+ \sum_{j=1}^{k-1}\delta_{j}
	\right) 
	+
	\sum_{j=1}^{k-1}\gamma_{j}.
	\label{reslem}
\end{eqnarray}
\end{lemma}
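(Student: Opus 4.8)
The plan is to avoid the induction on $k$ altogether and instead work on a single ``good event''. Set $\Delta:=\sum_{j=1}^{k-1}\delta_{j}$ and $\Gamma:=\sum_{j=1}^{k-1}\gamma_{j}$, and define
\[
\Omega_{0}:=\bigcap_{i=1}^{k-1}\bigl\{|\eta_{i+1}-\eta_{i}|\leq\delta_{i}\bigr\}.
\]
By subadditivity of $\P$ together with the hypothesis, $\P\{\Omega_{0}^{c}\}\leq\sum_{i=1}^{k-1}\P\{|\eta_{i+1}-\eta_{i}|>\delta_{i}\}\leq\Gamma$, and on $\Omega_{0}$ the triangle inequality gives $|\eta_{k}-\eta_{1}|\leq\sum_{i=1}^{k-1}|\eta_{i+1}-\eta_{i}|\leq\Delta$. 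Thus the whole lemma reduces to the two-variable statement for the pair $(\eta_{1},\eta_{k})$ with discrepancy $\Delta$ and exceptional probability $\Gamma$.

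For the upper bound I would split $\{\eta_{k}\leq x\}$ according to $\Omega_{0}$: on $\{\eta_{k}\leq x\}\cap\Omega_{0}$ one has $\eta_{1}\leq\eta_{k}+\Delta\leq x+\Delta$, so $\{\eta_{k}\leq x\}\cap\Omega_{0}\subseteq\{\eta_{1}\leq x+\Delta\}$, whence
\[
F_{\eta_{k}}(x)=\P\{\eta_{k}\leq x\}\leq\P\{\eta_{1}\leq x+\Delta\}+\P\{\Omega_{0}^{c}\}\leq F_{\eta_{1}}(x+\Delta)+\Gamma.
\]
For the lower bound, symmetrically, on $\{\eta_{1}\leq x-\Delta\}\cap\Omega_{0}$ one has $\eta_{k}\leq\eta_{1}+\Delta\leq x$, so $\{\eta_{1}\leq x-\Delta\}\cap\Omega_{0}\subseteq\{\eta_{k}\leq x\}$ and therefore $F_{\eta_{1}}(x-\Delta)\leq\P\{\eta_{k}\leq x\}+\P\{\Omega_{0}^{c}\}\leq F_{\eta_{k}}(x)+\Gamma$; rearranging yields $F_{\eta_{k}}(x)\geq F_{\eta_{1}}(x-\Delta)-\Gamma$. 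Together these are exactly \eqref{reslem}.

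There is no real obstacle here: the only points requiring care are keeping the directions of the set inclusions consistent with the monotonicity of the distribution functions, and noting that the hypotheses enter only through the union bound on $\Omega_{0}^{c}$, so that no independence of the $\eta_{i}$ is used. An essentially equivalent alternative is induction on $k$: prove the case $k=2$ exactly as above with $\Delta=\delta_{1}$, $\Gamma=\gamma_{1}$, and then feed the inductive bounds for $F_{\eta_{k-1}}$ evaluated at $x\pm\delta_{k-1}$ into the two-variable estimate for $(\eta_{k-1},\eta_{k})$; the telescoping of the $\delta_{j}$ and $\gamma_{j}$ then gives the claim. I would present the single-good-event version, since it is shorter and makes transparent that the bound is a pure measure-theoretic estimate.
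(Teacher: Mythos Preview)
Your argument is correct. The paper itself does not prove this lemma but only cites the preprint \cite{panov2014a}, pp.~33--34, so there is no in-paper proof to compare against; your single-good-event approach via the union bound and triangle inequality is the standard one and is entirely adequate here.
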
 
Let us apply this lemma with 
\begin{eqnarray*}
\eta_k = 
\|\L_k(x) / \sqrt{p(x)}\|, \qquad k=1..4.
\end{eqnarray*}
For any \(k=1..4,\)
\begin{eqnarray*}
	\left| 
		\eta_{k} - \eta_{k-1}
	\right| \leq q^{-1/2}
		\Bigl| 
		\left\|
			\L_{k}(x)
		\right\| 
		-
		\left\|
			\L_{k-1}(x)
		\right\| 
	\Bigr|
\leq
q^{-1/2}
\left\| 
		\L_{k}(x) - \L_{k-1}(x)
\right\|.
\end{eqnarray*}    
Therefore, under the choice \(h=M^{-1/2},\)
\begin{eqnarray*}
		\delta_{1}=c_4 q^{-1/2}\; M^{1/2} 
 \frac{\log(n)}{\sqrt{n}}, &&\qquad  \gamma_{1} =\frac{
   c_2
}{
  n^{-\kappa}
},\\
\delta_{2}=c_4 q^{-1/2}\; M^{-1/2} \; k_1, &&\qquad  \gamma_{2} =2(1-\Phi(k_1)),\\
\delta_{3}=c_7 q^{-1/2}\; M^{-1/2}  k_2, &&\qquad  \gamma_{3} =4(1-\Phi(k_2 / \sqrt{B})),
\end{eqnarray*}
with any \(k_1,k_2>0.\)   The choice of these parameters is based on the idea to have  \(\gamma_1 \asymp \gamma_2 \asymp \gamma_3\). Taking into account the inequality 
\begin{eqnarray}
\label{Michna}
1 - \Phi(x) \leq 	 \frac{ 1}{x\sqrt{2 \pi}} e^{-x^{2}/2}, \qquad \forall \; x>0,
\end{eqnarray}
see, e.g., p.2 in \cite{Michna}, we choose \(k_1=\sqrt{2 \kappa \log n}\), and \(k_{2}=\sqrt{2 B \kappa \log n}\). Under this choice of \(q_1, q_2\), we have 
\[
\delta_1 + \delta_2 + \delta_3 \leq 
c_9 \frac{\log(n)}{\sqrt{n/M}} + c_{10} \frac{\sqrt{\log(n)}}{\sqrt{M}}.
\]\

\subsection{Proof of Theorem~\ref{thm35}}
\label{app4}
The proof consists of 4 steps. On the first step, we consider the distribution function 
\begin{eqnarray*}
F_{n,M}(x) :=
	\P \left\{
		\sqrt{\frac{n}{M}}
\RR_{n}
		\leq x	\right\},
\end{eqnarray*}
and show that it is close to \(A_M(x)\) for \(x\geq c_M\), see \eqref{amm2}. Next, on step~2, we show similar result for \(x<c_M\).  Finally, on step~3, we show that \(\RR_n(x)\) can be replaced by \(\DDD_n(x)\).

\textbf{1.}  
Let us first show that
\begin{eqnarray}
\label{amm2}
\sup_{x \geq c_M} 
\left| 
F_{n,M}(x)	- A_{M}(x)
\right| 
	\leq
\bar{c}_1\; M^{-\theta}+
	\bar{c}_2\; n^{-\kappa}.
\end{eqnarray}
From Corollary~\ref{cc} we get for any \(x\geq c_{M}\)
\begin{eqnarray*}
F_{n,M}(x)
& \leq & 
\left[
\P \Bigl\{ \max|\Upsilon(x)| \leq x+\gamma_{n,M} \Bigr\} 
\right]^{M} +  \C_1 n^{-\kappa}\\
& = & 
\exp\Bigl\{ 
M  \log
\Bigl[ 1- 
\sum_{i=1}^k \pp_i(x+\gamma_{n,M} ) 
 + r(x)
\Bigr]
\Bigr\} +  \C_1 n^{-\kappa},
\end{eqnarray*}
where \(r(x)=O(M^{-(1+\chi)}),\) uniformly over all \(x \geq c_M.\) Therefore, 
\begin{multline*}
F_{n,M}(x)
 \leq 
A_{M}(x+\gamma_{n,M})
\exp\Bigl\{-\frac{M}{2} \bigl( \sum_{i=1}^k \pp_i(x+\gamma_{n,M}) \bigr)^2 (1+o(1))+O(M^{-\chi})
\Bigr\} \\ +  \C_1 n^{-\kappa}.
\end{multline*}
From Remark~\ref{corcor}, we get for \(M\) large enough,
\begin{eqnarray*}
\sum_{i=1}^k \pp_i(x+\gamma_{n,M}) \lesssim
 \frac{1}{c_M} e^{-c_M^2/(2S)}\lesssim M^{-1} 
 \frac{ e^{(2S \log M)^{1/2}}}{(2S \log(M))^{1/2}}
\lesssim M^{-H}
\end{eqnarray*}
for any \(H \in (0,1).\) Taking \(H>1/2,\) we arrive at 
\begin{eqnarray*}
F_{n,M}(x)
& \leq & 
A_{M}(x+\gamma_{n,M}) + \bar{c}_1\; M^{-\theta}+
	\bar{c}_2\; n^{-\kappa}\end{eqnarray*}
with some \(\bar{c}_1>0\) and \(\bar{c}_2 = \C_1. \) Due to Lemma~\ref{lemapp}, \(A_M(x)\) and \(A_M(x+\gamma_{n,M})\) differ by a quantity, which uniformly converges to zero at polynomial rate w.r.t. \(n,M.\)  The proof for the inverse inequality follows the same lines.

\textbf{2.} 
Let us consider now the  case \(x<c_M\). From Proposition~\ref{prop1} and Remark~\ref{corcor} we get 
\begin{eqnarray*}
F_{n,M}(x)
&\leq& \exp\Bigl\{
M \log \P \bigl\{
\zeta \leq c_M +\gamma_{n,M}
\bigr\}
\Bigr\}+\C_1 n^{-\kappa}\\
&=&
\exp\Bigl\{
M \log \bigl( 
1 - \frac{1}{2 \pi (c_M+\gamma_{n,M})}
e^{-(c_M+\gamma_{n,M})^2/(2S)}\mathfrak{c}_0 
(1+o(1))
\bigr)
\Bigr\}\\&&\hspace{8cm}+\C_1 n^{-\kappa}\\
&=&
\exp\Bigl\{
-
\frac{\mathfrak{c}_0 }{2\pi}  \cdot 
 \frac{ e^{(2S \log M)^{1/2}}}{(2S \log(M))^{1/2}}
(1+o(1))
\bigr)
\Bigr\}+\C_1 n^{-\kappa}.
\end{eqnarray*}
Since  
\begin{eqnarray}\label{bol}
\frac{ e^{(2S \log M)^{1/2}}}{(2S \log(M))^{1/2}} > K \log M
\end{eqnarray}
for any \(K>0\) and \(M\) large enough, we get that \(F_{n,M}(x)\) converges to zero at polynomial rate with respect to both \(n\) and \(M.\)

\textbf{3.} According to~\eqref{ineq},
\begin{eqnarray*}
\P \left\{
		\sqrt{\frac{n}{M}}
\DDD_{n}
		\leq x
	\right\}
	&\leq&
	\P \left\{
\sqrt{\frac{n}{M}}
\RR_{n}	\leq x 	+	
C_4 n^{1/2} M^{-\beta-1/2} 
\right\}. 
\end{eqnarray*}
Due to the steps 1,2 of this proof, we have 
\begin{eqnarray*}
\P \left\{
		\sqrt{\frac{n}{M}}
\DDD_{n}
		\leq x
	\right\} \leq 
	A_M\Bigl(x+ C_4 n^{1/2} M^{-\beta-1/2} \Bigr)+\bar{c}_1\; M^{-\theta}+
	\bar{c}_2\; n^{-\kappa}.
\end{eqnarray*}
Applying once more Lemma~\ref{lemapp}, we arrive at 
\begin{eqnarray*}
\P \left\{
		\sqrt{\frac{n}{M}}
\DDD_{n}
		\leq x
	\right\} 
	\leq A_M(x) + c_1  n^{1/2} M^{\theta_1-\beta-1/2}  
	+\bar{c}_1\; M^{-\theta}+
	\bar{c}_2\; n^{-\kappa}
\end{eqnarray*}
with some \(c_1>0.\) Note that  if \(M=\lfloor n^\lambda \rfloor\) with \(\lambda \in ((2 \beta+1)^{-1},1)\), there exists some \(\theta_1\) such that 
\begin{eqnarray*}
 c_1  n^{1/2} M^{\theta_1-\beta-1/2}  
	+\bar{c}_1\; M^{-\theta}+
	\bar{c}_2\; n^{-\kappa}
	= \bar{c}n^{-\gamma}
\end{eqnarray*}
with some \(\bar{c}, \gamma>0.\) This observation completes the proof.
\bibliographystyle{spbasic}
\bibliography{Panov_bibliography}

\appendix

\section{Choice of the parameter $\delta$} \label{A}
In this section, we provide an example on the choice of the parameter \(\delta\) in Theorem~\ref{thm2}. We will concentrate on the case of the Gaussian process\begin{eqnarray}\label{Ups2}
\Upsilon(t) = \sum_{j=0}^J \psi_j (t) Z_j,
\end{eqnarray}
where \(Z_0, Z_1,...\) are the Legendre polynomials. As it is explained in Section~\ref{exmain}, this case corresponds to  the item (i) in Theorem~\ref{thm2}:  $t_{0}=A=-1,$ $r_{10}(t_{0},t_{0})=\frac
{1}{2}(\sigma^{2}(t_{0}))^{\prime}<0$ (the behaviour at the point $t_{0}=B=1$ is completely the same). Denote 
\[
D(\delta):=\max_{t\in \widetilde{M}(\delta)}\bigl[-(\sigma^{2}(t))^{\prime}\bigr],
\]
where \(\widetilde{M}(\delta) =\M(\delta) \cap [-1,0]\). In what follows, we assume that \(\delta\) is such that  \(\wM(\delta) =[A,b]\) for some \(b \in (-1,0).\) In the considered case, the absolute value of \((\sigma^{2}(t))^{\prime}\) decays in some right vicinity of the point \(t_0=-1\) (for instance, the plot of \(\sigma^{2}(t)\) for \(J=4\) is given on Figure~\ref{Fig2}) and therefore we can take \(\delta\) such that 
\begin{equation}
D(\delta)=-(\sigma^{2}(A))^{\prime}=2|r_{10}(A,A)|. \label{1}%
\end{equation}
\begin{figure*}
\begin{center}
\includegraphics[width=0.75\linewidth ]{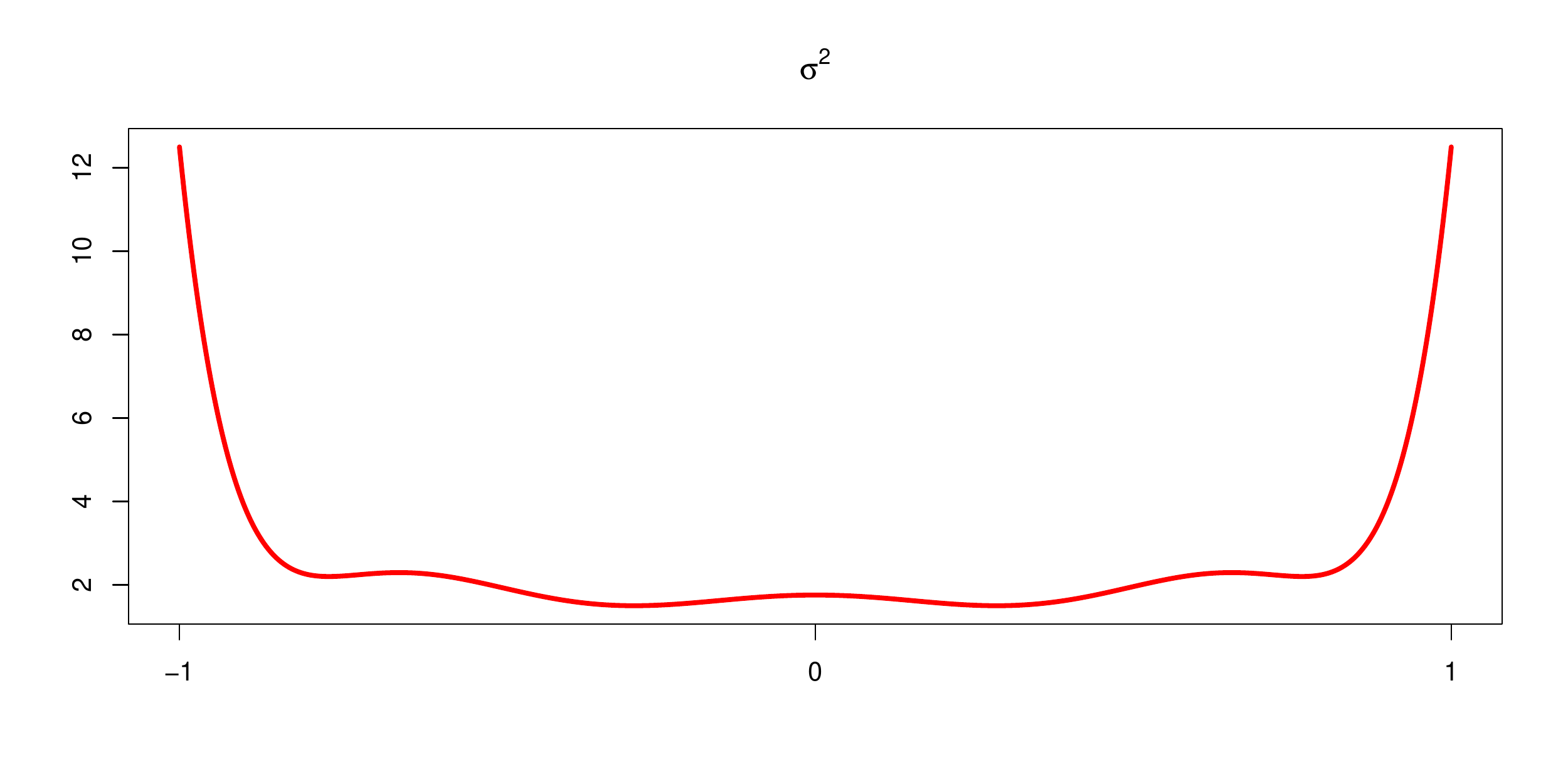}\caption{ Plot of the variance \(\sigma^2(t) = \Var(\Upsilon(t))\) for \(J=4.\)  Note that \(S= \max\sigma^2(t) =(J+1)^2/2=25/2\), the set \(\M(\delta):=\{t: \sigma^2(t)>S/(1+\delta)\}\) is an interval for any \(\delta<4.44.\)} \label{Fig2}
\end{center}
\end{figure*}

Now we aim to find a lower bound for  $\min_{s,t\in\wM(\delta)}r(s,t)$.  The two-dimensional mean value theorem yields
\begin{multline*}
r(s,t) =r(A,A)  +\int_{0}^{1}\Bigl[(s-A)r_{10}(A+h(s-A),A+h(t-A))
\\+(t-A)r_{01}%
(A+h(s-A),A+h(t-A))\Bigr]dh.
\end{multline*}
We get 
\[
r(s,t)\geq S-2(b-A)D_{1}(\delta),\qquad s,t\in \wM(\delta),
\]
where  
\(
D_{1}(\delta)=\max_{s,t\in \wM(\delta)}(-r_{01}(s,t)).
\)
The inequality (\ref{chi0}) reads as 
\begin{equation}
\chi<\chi_{1}(\delta):=\min\left\{  \delta,\frac{S-(b-A)D_{1}(\delta)}{(b-A)D_{1}(\delta)}\right\}.\label{chiex1}%
\end{equation}
Note that for the Legendre polynomials, \(A=-1\) and
\begin{eqnarray}\label{D1}
D_1(\delta) = \max_{s,t\in \wM(\delta)} \Bigl( - \sum_{j=0}^J \psi_j(s) \psi'_j(t) \Bigr)= - \sum_{j=0}^J \psi_j(-1) \psi'_j(-1),
\end{eqnarray}
because  for  \(\delta\) small enough it holds 
\begin{eqnarray} \label{prop11} -  \psi_j(s) \psi'_j(t) \geq 0 \qquad \forall s,t \in \wM(\delta), \; \; \forall j=0,1,..
\end{eqnarray} and \[\argmax_{s \in  \wM(\delta)} |\psi'_j(s)| = \argmax_{t \in  \wM(\delta)} |\psi_j(t)| = -1,\]
see, e.g., Section~5.1 from~\cite{panov2014a}. The last expression in~\eqref{D1} and \(S\) can be directly computed:
\begin{eqnarray*}
- \sum_{j=0}^J \psi'_j(-1) \psi_j(-1) = 
\frac{1}{4} \sum_{j=0}^J j(j+1)(2j+1)&=&\frac{1}{8}J(J+1)^2(J+2),\\
S = \sum_{j=0}^J \psi_j^2(-1)=\sum_{j=0}^J \frac{2j+1}{2} &=& \frac{(J+1)^2}{2}.
\end{eqnarray*}
Therefore, we conclude that 
\begin{eqnarray*}
\chi_1(\delta) = \min\left\{  \delta,\frac{4}{(b-A)J(J+2)} - 1 \right\}
\end{eqnarray*}
The second restriction on \(\chi\) arrises due to 
 (\ref{chi00}):
\begin{equation}
\chi\leq\chi_{2}(\delta):=\min_{t\in \wM(\delta)} \Psi(t), \qquad \Psi(t) := \frac{r_{10}^{2}(t,t)}%
{r_{11}(t,t)}.\label{chiex2}%
\end{equation}
This function cann't be simplified in the considered particular case, and will be analysed numerically later. 

The last restriction on \(\chi \) appears due to the Corollary~\ref{cc}.
Applying Theorem~8.1 from \cite{Piterbarg}, we get 
\begin{eqnarray}\label{rest}
P_{2u}(X(s)+X(t),\wM(\delta) \times ( \M(\delta)\setminus \wM(\delta)))\leq 
C(b-A)^{2}ue^{-2u^{2}/R(\delta)}
\end{eqnarray}
with some \(C>0\) and  \[R(\delta) =\max_{
\begin{smallmatrix} 
s\in  \wM(\delta), \\ t \in \M(\delta)\setminus \wM(\delta)
\end{smallmatrix}} \Var(\Upsilon_s+\Upsilon_t).\] 
We have
\[
\Var (\Upsilon_s+\Upsilon_t)
=r(s,s)+r(t,t)+2r(s,t) = 
\sum_{j=0}^J \bigl( \psi_j (s) + \psi_j(t) \bigr)^2.
\]
For the Legendre polynomials, it holds \(\psi_j(t) = (-1)^j \psi_j(-t)\), and therefore 
\begin{eqnarray*}
R(\delta) = \max_{s,t \in \wM(\delta)} \RR(s,t), \qquad \mbox{where} \quad \RR(s,t):=\sum_{j=0}^J \bigl( \psi_j (s) + (-1)^j \psi_j(t) \bigr)^2. 
\end{eqnarray*}
Empirically, we get that the maximum of \(\RR(s,t)\) is attained at the point \((-1,-1)\) (see Figure~\ref{Fig4}), and therefore
\begin{eqnarray*}
R(\delta) = \begin{cases}
(J+1)(J+2), & J \; \; \mbox{is even},\\
J(J+1), & J \; \; \mbox{is odd},
\end{cases}
\end{eqnarray*}
for any \(\delta\), which guarantees that the set \(\M(\delta)\) is an interval.
Therefore, from~\eqref{rest} we get the last restriction on \(\chi:\)
\begin{equation}
\chi<\chi_{3}(\delta):=\frac{4S}{R(\delta)}-1
=
\begin{cases}
J/ (J+2), & J \; \; \mbox{is even},\\
(J+2)/J, & J \; \; \mbox{is odd}.
\end{cases}\label{chiex3}%
\end{equation}
Finally, we conclude that for the optimisation of \(\chi\), we should find as follows  
\[
\chi_{opt}=\max_{\delta} m(\delta), \qquad \mbox{where} \quad m(\delta):=\min\{\chi_{1}(\delta),\chi_{2}(\delta),\chi_{3}(\delta)\}.
\]
This optimisation procedure is illustrated on Figure~\ref{Fig3}. The left picture presents the plots of the functions  \(\chi_{1}(\delta),\chi_{2}(\delta),\chi_{3}(\delta)\), while the right picture depicts the minimum between them. It turns out, that the maximum over \(\delta\) is equal to \(2/3,\) and this value is attained for any \(\delta \in (0.71, 2.13).\) 

\begin{figure*}
\begin{center}
\includegraphics[width=0.8\linewidth ]{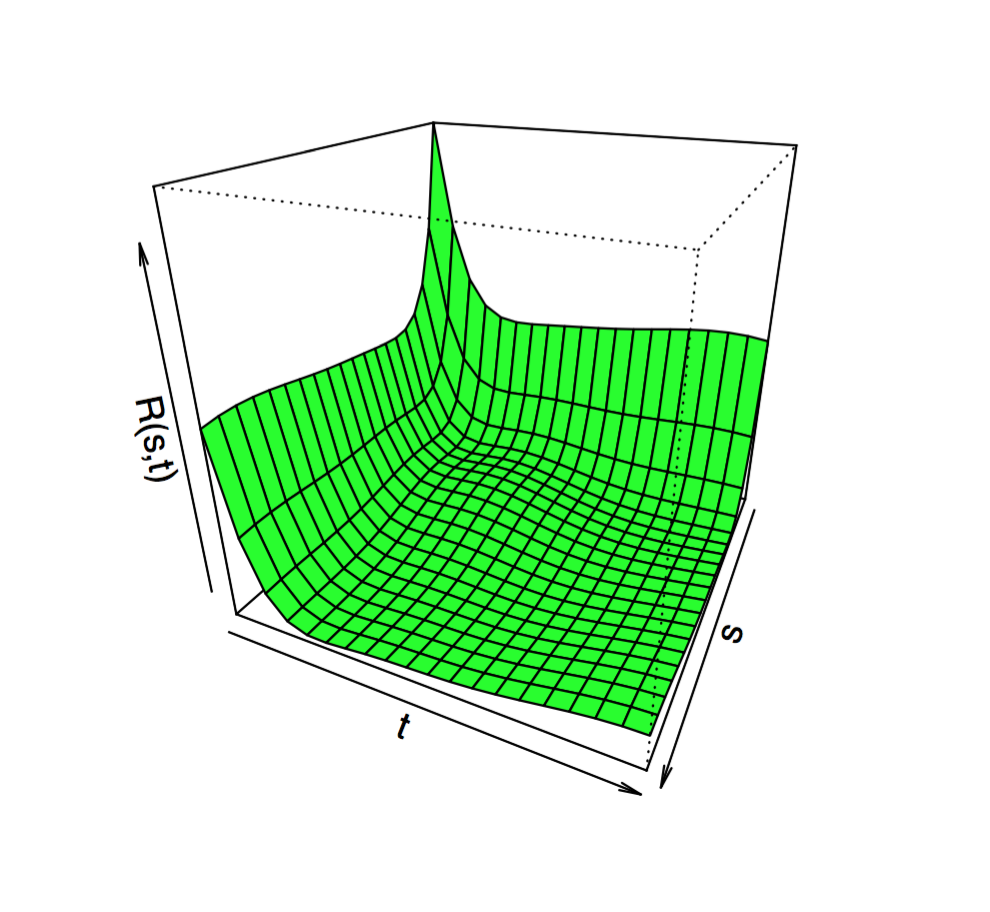}\caption{\label{Fig4} Plot of the function \(\RR(s,t)\) for \(J=4.\) Maximum is attained at the point \((-1,-1).\)}
\end{center}
\end{figure*}

\begin{figure*}
\begin{center}
\includegraphics[width=1\linewidth ]{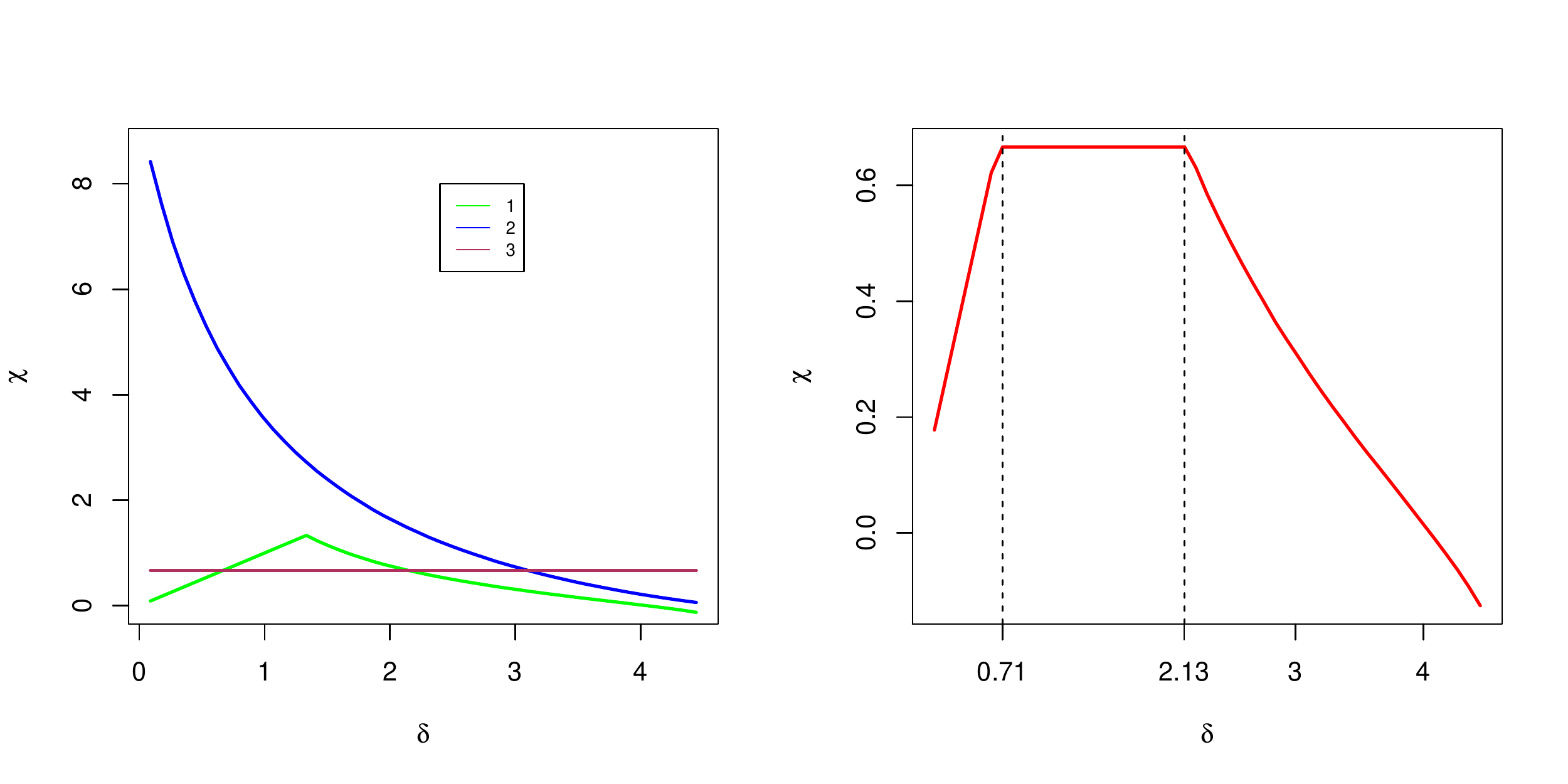}\caption{\label{Fig3} Plots of the function \(\chi_1(t), \chi_2(t), \chi_3(t)\) (left) and of the function \(m(\delta)=\min(\chi_1(t),\chi_2(t),\chi_3(t))\) (right) for \(J=4.\)    The maximal value of \(m(\delta)\) is equal to \(J/(J+2)=2/3, \) and this value is attained with any \(\delta \in (0.71, 2.13).\)}
\end{center}
\end{figure*}




\section{SBR-type theorem for projection density estimates}\label{SBR}
In this subsection, we briefly discuss the SBR-type theorem for the estimate~\eqref{projest}. The next theorem shows that the distribution of \(\RR_{n}\) converges to the Gumbel distribution. Nevertheless, the rate of this convergence is very slow, of logarithmic order. 
\begin{theorem}\label{thm3}
Assume that \(p\in \mathcal{P}_{q,H,\beta}\) with some \(q,H>0, \; \beta \in (0,1]\), and the basis functions \(\psi_j(x),  j=0,1,2,...\) satisfy the assumptions (A1) and (A2).  Let \(M=M_n=\lfloor n^\lambda \rfloor\) with \(\lambda \in (0,1)\).

\begin{enumerate}[(i)]
\item For any \(x \in \R\), it holds
\begin{eqnarray}\label{main1}
		\P \Bigl\{\sqrt{\frac{n}{M_n}}\RR_{n} \leq u_M(x) \Bigr\} =e^{-  e^{-x}} \left( 
			1 +e^{-x}  \Lambda_M (1+o(1))
		\right),
\end{eqnarray}
as \(n \to \infty,\)
where 
\begin{eqnarray*}
\Lambda_M = \frac{\bigl( \log \log (M) \bigr)^2}{16 \log (M)} 
\end{eqnarray*}
and
\begin{eqnarray} \label{un}
u_M(x) &=& a_M + \frac{xS}{a_M}, \\
\label{un2} 
a_M  &=& \bigl( 2S \log(M) \bigr)^{1/2} - \frac{S^{1/2}}{2^{3/2}} \frac{\log\bigl(
\bigl( 8 \pi^2 S / \mathfrak{c}_0  \bigr) \log(M)
\bigr)
}
{\bigl( \log(M) \bigr)^{1/2}
}
\end{eqnarray}
with \(S=\max \sigma^2(t)\), and \(\mathfrak{c}_0\) defined by \eqref{Gauss}  for \(X(t) = \Upsilon(t)\).
\item In~\eqref{main1}, \(\RR_n\) can be changed to \(\DDD_n\), provided \(\lambda \in (1/(2\beta+1),1).\)
\end{enumerate}
\end{theorem}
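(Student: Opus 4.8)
\emph{Strategy and reduction.} The plan is to reduce, via Proposition~\ref{prop1}, the distribution function $F_{n,M}(u):=\P\{\sqrt{n/M}\,\RR_n\le u\}$ to an $M$-th power of the tail of $\zeta=\sup_x|\Upsilon(x)|$, to insert the sharp tail asymptotics of $\zeta$ supplied by Corollary~\ref{cc} and Remark~\ref{corcor}, and then to carry out a careful second-order expansion of the normalising sequence $a_M$ from \eqref{un2}; the Gumbel limit $e^{-e^{-x}}$ will come from the leading order, while the correction $e^{-x}\Lambda_M$ is produced entirely by the squared $\log\log M$-component of $a_M$. Concretely, fix $x\in\R$, set $u_M(x)=a_M+xS/a_M$, and apply Proposition~\ref{prop1}: this sandwiches $F_{n,M}(u_M(x))$ between $[\P\{\zeta\le u_M(x)-\gamma_{n,M}\}]^M-\C_1 n^{-\kappa}$ and $[\P\{\zeta\le u_M(x)+\gamma_{n,M}\}]^M+\C_1 n^{-\kappa}$. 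Because $u_M(x)\pm\gamma_{n,M}=u_M\!\left(x\pm\gamma_{n,M}a_M/S\right)$ and, for $M=\lfloor n^\lambda\rfloor$, both $\gamma_{n,M}a_M/S$ and $\C_1 n^{-\kappa}$ are polynomially small in $n$ (hence $o(\Lambda_M)$ for fixed $x$), it suffices to prove $G_M(x):=[\P\{\zeta\le u_M(x)\}]^M=e^{-e^{-x}}\bigl(1+e^{-x}\Lambda_M(1+o(1))\bigr)$ and feed the perturbation back in at the end.

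\emph{Inserting the tail of $\zeta$.} Write $G_M(x)=\exp\bigl(M\log(1-y_M(x))\bigr)$ with $y_M(x):=\P\{\zeta>u_M(x)\}$, and use $M\log(1-y)=-My+O(My^2)$. By Corollary~\ref{cc} together with the Mills-ratio expansion (cf.\ \eqref{Michna} and $1-\Phi(t)=(t\sqrt{2\pi})^{-1}e^{-t^2/2}(1-t^{-2}+O(t^{-4}))$) one has $\P\{\zeta>v\}=\ccc_0 v^{-1}e^{-v^2/(2S)}\bigl(1+O(1/\log M)\bigr)$ for $v$ of order $\sqrt{\log M}$, where the power-law relative error $O(v^{-2})=O(1/\log M)$ covers all the contributions $2\P\{\Upsilon(x_\circ^{(i)})\ge v\}$ and $\E N_v^{+}$ appearing in Corollary~\ref{cc}. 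Since $u_M(x)^2/(2S)=a_M^2/(2S)+x+O(1/\log M)$ and $u_M(x)^{-1}=a_M^{-1}(1+O(1/\log M))$, this gives $My_M(x)=e^{-x}\cdot M\,\ccc_0\,a_M^{-1}e^{-a_M^2/(2S)}\cdot(1+O(1/\log M))$. As $My_M(x)$ will be bounded, $My_M(x)^2=O(1/M)$, and both $O(1/M)$ and $O(1/\log M)$ are $o(\Lambda_M)$; so everything reduces to the deterministic quantity $M\,\ccc_0\,a_M^{-1}e^{-a_M^2/(2S)}$.

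\emph{Expansion of $a_M$, and part (ii).} A direct computation from \eqref{un2} gives $M\,\ccc_0\,a_M^{-1}e^{-a_M^2/(2S)}=1-\frac{(\log\log M)^2}{16\log M}+O\!\left(\frac{\log\log M}{\log M}\right)=1-\Lambda_M(1+o(1))$: the leading value $1$ is exactly what forces the first two terms of $a_M$ in \eqref{un2}, and the correction $\frac{(\log\log M)^2}{16\log M}=\Lambda_M$ arises because $a_M$ is only a two-term expansion, so the square of its $\log\log M$-component fails to cancel in $a_M^2/(2S)$. Hence $My_M(x)=e^{-x}(1-\Lambda_M(1+o(1)))$ and $G_M(x)=\exp\bigl(-e^{-x}(1-\Lambda_M(1+o(1)))\bigr)=e^{-e^{-x}}\bigl(1+e^{-x}\Lambda_M(1+o(1))\bigr)$, which is \eqref{main1}. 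For part (ii) I would pass from $\RR_n$ to $\DDD_n$ exactly as in Step~3 of the proof of Theorem~\ref{thm35}: the bias obeys $\sup_x|\E\hp_n(x)-p(x)|/\sqrt{p(x)}\lesssim M^{-\beta}$ under the H\"older assumption, so after the sandwiching $\sqrt{n/M}\,\DDD_n$ and $\sqrt{n/M}\,\RR_n$ differ only by an $x$-shift of order $n^{1/2}M^{-\beta-1/2}\cdot a_M/S$, which for $\lambda\in(1/(2\beta+1),1)$ is polynomially small in $n$ and therefore $o(\Lambda_M)$; \eqref{main1} then carries over to $\DDD_n$.

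\emph{Main obstacle.} The delicate step is the expansion of $a_M$: one has to retain enough terms to reach order $(\log\log M)^2/\log M$ — the scale at which the second-order correction lives — while at the same time verifying that every competing error term (the Mills-ratio correction $O(1/\log M)$, the term $O(My^2)=O(1/M)$, the cross term $O(\log\log M/\log M)$ produced when squaring the $\log\log M$-part of $a_M$, the perturbation $\gamma_{n,M}$, the coupling error $\C_1 n^{-\kappa}$ from Proposition~\ref{prop1}, and, for (ii), the bias shift) is genuinely of smaller order than $\Lambda_M$ once it has been multiplied by $M$ and pushed through the exponential. This is bookkeeping rather than a conceptual hurdle, but the constant $1/16$ in $\Lambda_M$ is easily lost if the expansion of $a_M^2$ is truncated too early.
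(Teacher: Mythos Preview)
Your proposal is correct and follows essentially the same route as the paper's proof: you sandwich $F_{n,M}$ via Proposition~\ref{prop1}, insert the tail expansion of $\zeta$ from Corollary~\ref{cc}/Remark~\ref{corcor}, reduce to the deterministic quantity $\mathcal{G}_M=M\,\ccc_0\,a_M^{-1}e^{-a_M^2/(2S)}$ (the paper writes this with an extra $1/(2\pi)$, a harmless normalisation of $\ccc_0$), and identify $\Lambda_M$ as the $d_M^2/(2Sc_M^2)$ contribution coming from squaring the $\log\log M$ part of $a_M$; part~(ii) is handled exactly as in the paper by the bias bound $|\E\hat p_n-p|\lesssim M^{-\beta}$ and a final shift. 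Your bookkeeping of error orders ($O(1/\log M)$, $O(\log\log M/\log M)$, $O(My_M^2)$, $\gamma_{n,M}a_M$, $n^{-\kappa}$, and the bias shift all being $o(\Lambda_M)$) is in fact slightly more explicit than the paper's.
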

\begin{proof}
\underline{\textbf{(i)}.} Note that the process \(\p(x)\) defined by~\eqref{zeta} has zero mean and variance equal to \eqref{vvar}. From \eqref{Fst1} and \eqref{Gauss}, we get for \(u \to \infty,\)
\begin{eqnarray}\nonumber
\P \Bigl\{\sqrt{\frac{n}{M}}\RR_{n} \leq u \Bigr\}
		& \leq & 
\left[
\P \Bigl\{ \max|\Upsilon(x)| \leq u + \gamma_{n,M} \Bigr\} 
\right]^{M} +  \C_{1} n^{-\kappa}\\
\nonumber
&=& 
\exp\Bigl\{M
\log
\Bigl(
1 - \frac{
1
}{2 \pi 
}
e^{-\u_{n,M}^2/(2 \S)} 
\u_{n,M}^{-1}
\bigl( 
\mathfrak{c}_0 + \mathfrak{c}_1 \u_{n,M}^{-2} + o(\u_{n,M}^{-2}) 
\bigr)
\Bigr)
\Bigr\} \\ \nonumber
&& \hspace{7cm}+ \C_{1} n^{-\kappa}
\\ \nonumber
&=&  
\exp\Bigl\{
- \frac{
M
}{2 \pi 
}
e^{-\u_{n,M}^2/(2 \S)} 
\u_{n,M}^{-1}
\Bigl( 
\mathfrak{c}_0 + \mathfrak{c}_1 \u_{n,M}^{-2} 
+ o(\u_{n,M}^{-2}) 
\Bigr)\Bigr\} 
\\
&& \hspace{5cm}
+  \C_{1} n^{-\kappa} \label{i1}
\end{eqnarray}
with \(\u_{n,M}  =u+\gamma_{n,M}\). 
Next, let us substitute  \(u = a_M + x S / a_M - \gamma_{n,M}\) with some \(a_M \to \infty\)  as \(M \to \infty\).  We have 
\begin{multline*}
\P \Bigl\{\sqrt{\frac{n}{M}}\RR_{n} \leq a_M + \frac{xS}{a_M} -\gamma_{n,M} \Bigr\}
\\ \leq  
\exp\Bigl\{
-e^{-x}
\cdot
\mathcal{G}_M
\cdot
\Bigl(
	1+a_M^{-2} 
	\bigl(
	-x^2(S/2) - xS +\mathfrak{c}_1/\mathfrak{c}_0
	\bigr)
	+o(1)
\Bigr)
\Bigr\} 
+  \C_1 n^{-\kappa},
\end{multline*}
as \(M \to \infty,\) where 
\begin{eqnarray}
\label{Gn} \mathcal{G}_M = \frac{M \mathfrak{c}_0}{2 \pi 
e^{a_M^2/(2 \S)}  a_M}.
\end{eqnarray}
Now we specify \(a_M\) such that \(\mathcal{G}_M \asymp 1\).
More concretely, let us find \(a_M\) in the form \(a_M = c_M - d_M /c_M\), where \(c_M, d_M\to \infty, d_M/c_M \to 0\) as \(M \to \infty\). This form leads to the equalities
\begin{eqnarray*}
M = e^{c_M^2/(2S)}, \qquad e^{d_M/S} \mathfrak{c}_0 = 2\pi c_M,
\end{eqnarray*}
which suggest
\begin{eqnarray*}
c_M = \sqrt{2S \log(M)}, \qquad d_M = S \log( 2 \pi c_M/\mathfrak{c}_0). 
\end{eqnarray*}
Under this choice, we get 
\begin{eqnarray}
\label{GGn}
\mathcal{G}_M = 1-\frac{d_M^2}{2S c_M^2} (1+o(1)).
\end{eqnarray}
Therefore, 
\begin{multline*}
\P \Bigl\{\sqrt{\frac{n}{M_n}}\RR_{n} \leq a_M + \frac{xS}{a_M} -\gamma_{n,M}\Bigr\}\\
 \leq 
\exp\Bigl\{
-e^{-x}
\cdot
\Bigl(
1-\frac{d_M^2}{2S c_M^2} (1+o(1) \Bigr) 
\cdot
\Bigl(
	1+c_M^{-2} 
	\bigl(
	-x^2(S/2) - xS +\mathfrak{c}_1/\mathfrak{c}_0
	\bigr)
	+o(1)
\Bigr)
\Bigr\} 
\\+  \C_1 n^{-\kappa}\\
= 
\exp\Bigl\{
-e^{-x} \bigl( 
1 - 
\Lambda_M (1+o(1))
\Bigr\}
+ \C_1 n^{-\kappa}
=
e^{
-e^{-x} 
}
\bigl( 
1 + e^{-x}
\Lambda_M
(1+o(1))
\bigr).
\end{multline*}
After the change \(x\) by \(x+\gamma_{n,M} a_M/S\), we get
\begin{eqnarray*}
\P \Bigl\{\sqrt{\frac{n}{M_n}}\RR_{n} \leq u_M(x)\Bigr\}
&\leq& 
e^{
-e^{-x} 
}
\bigl( 
1 + e^{-x}
\Lambda_M
(1+o(1))
\bigr),\quad n\to \infty,\end{eqnarray*}
because \(\gamma_{n,M} a_M\) converges to zero at polynomial rate (here we use the assumption \(M=n^\lambda\), \(\lambda \in (0,1)\) at the first time, see Remark~\ref{rem1}).  
The proof of the inverse inequality follows from the second statement of the Proposition~\ref{prop1}.

\underline{\textbf{(ii)}.} It holds
\begin{eqnarray}\label{y1} 
\bigl|
\hat{p}_n(x)  - p(x) 
\bigr| 
\leq 
\bigl| 
\hat{p}_n(x)  - \E \hat{p}_n(x) 
\bigr| 
+ 
\bigl| 
 \E \hat{p}_n(x)  - p(x)
\bigr|.
\end{eqnarray}
Let us show that  the second summand can be upper bounded by an expression of order \(M^{-1}\). In fact, for any \(x \in [A,B]\), 
\begin{eqnarray*}
0 &=& \sum_{m=1}^{M}\int_{I_{m}} \psim(y) dy \cdot \psim(x), \qquad \; j=1,2,...,\\
1 &=&  \sum_{m=1}^{M}\int_{I_{m}} \psi^{(m)}_0(y) dy \cdot \psi^{(m)}_0(x).
\end{eqnarray*}
Both equalities are obtained by direct calculations; e.g., the first equality  follows from 
\begin{eqnarray*}
\int_{I_{m}} \psim(y) dy &=& M^{-1/2} \sqrt{\frac{2j+1}{2}} \int_{-1}^1 \psi_j(x) dx \\&=& 
M^{-1/2} \sqrt{\frac{2j+1}{2}} \frac{1}{2^n n!} \frac{d^{n-1}}{dx^{n-1}}(x^2-1)^n |_{-1}^1 =0.
\end{eqnarray*}
Therefore, 
\begin{eqnarray*}
\E\hat{p}_n(x) - p(x) = \sum_{m=1}^M \sum_{j=0}^J 
\left[ 
\int_{I_{m}}\psim(y) 
\left( 
	p(y) - p(x)
\right) dy \cdot \psim(x) 
\right].
\end{eqnarray*}
Applying the Cauchy-Schwarz inequality for the second sum, we get
 \begin{multline}
 \label{tss}
\left| 
	\E\hat{p}_n(x) - p(x) 
\right| \leq \sum_{m=1}^M
\left(
	\sum_{j=0}^J  
	\left(
		\int_{I_{m}} \psim(y)
		\left( 
			p(y) - p(x)
		\right) dy
	\right)^{2}
\right)^{1/2}\\
\left(
	\sum_{j=0}^J 
	 (\psim(x))^{2}	
\right)^{1/2}.
\end{multline}
Next, we apply the Cauchy-Schwarz inequality  for the integral in \eqref{tss}:
 \begin{multline}
 \label{tss2}
\left| 
\E\hat{p}_n(x) - p(x) 
\right| \leq \sum_{m=1}^M 
\left(
	\int_{I_{m}} \left(
p(y)- p(x)
		\right)^{2} dy
		\cdot
	\sum_{j=0}^J  
		\int_{I_{m}} \left(\psim(y)\right)^{2} dy
\right)^{1/2}
\\
\cdot
\left(
	\sum_{j=0}^J 
	 \left( \psim(x)\right)^2
\right)^{1/2}.
\end{multline}
Now we will use  that 
\begin{eqnarray*}
\int_{I_{m}} \left(\psim(y)\right)^2 dy=1, \; \forall \; j,m, \qquad \qquad \sum_{j=0}^J 
	 \left( \psim(x)\right)^2 \leq C_1 M,
	 \end{eqnarray*}
with some \(C_1>0\) depending on \(J\). We have \begin{eqnarray*}
\int_{I_{m}} \left(
	p(y)- p(x)
\right)^{2} dy
\leq C_2 M^{-(2\beta+1)}, \qquad \forall \; x \in I_{m}.
\end{eqnarray*}
We arrive at  
\begin{eqnarray*}
\left| 
	\E \hat{p}_n(x) - p(x) 
\right| \leq C_3  M^{-\beta}, \qquad \forall \; x \in [A,B].\end{eqnarray*}
with some constant \(C_3>0\). Substituting this result into~\eqref{y1}, we get 
\begin{eqnarray*} 
		\sqrt{\frac{n}{M}}
		\DDD_{n}
		 \leq			\sqrt{\frac{n}{M}}
\RR_{n}		+	
C_4 n^{1/2} M^{-\beta-1/2} 
\end{eqnarray*}
where \( C_4  = C_3 q^{-1}\).  On the other side, we have 
\begin{eqnarray*}
	\left| 
		\hat{p}_n (x)  - p(x)
	\right|
	\geq
	\left| 
		 \hat{p}_n (x)  - \E \hat{p}_n (x)
	\right|
	-
	\left| 
		\E \hat{p}_n (x)
 - p(x)
	\right|,
\end{eqnarray*}
and therefore
\begin{eqnarray*} 
		\sqrt{\frac{n}{M}}
		\DDD_{n}	
		 \geq
		 \sqrt{\frac{n}{M}}
\RR_{n}		-	
C_4 n^{1/2} M^{-\beta-1/2}.
\end{eqnarray*}
We conclude that
\begin{eqnarray*}
\P \left\{
\left|
	\sqrt{\frac{n}{M}}
	\DDD_{n}		
	-
	\sqrt{\frac{n}{M}}
	\RR_{n}	
\right| 
\leq C_4 n^{1/2} M^{-\beta-1/2}
\right\}=1.
\end{eqnarray*}
By Lemma~\ref{final},  for any \(x \in \R,\)
\begin{multline}
\label{ineq}
\P \left\{
	\sqrt{\frac{n}{M}}
	\RR_{n}
	\leq x 
	-
	C_4 n^{1/2} M^{-\beta-1/2}
\right\}
\leq 
\P \left\{
	\sqrt{\frac{n}{M}}
	\DDD_{n}
	\leq x
\right\}
\\ 
\leq 
\P \left\{
\sqrt{\frac{n}{M}}
\RR_{n}	\leq x 	+	
C_4 n^{1/2} M^{-\beta-1/2}
\right\}.
\end{multline}
Substituting \(u_{M}(x)\) defined by \eqref{un}-\eqref{un2} instead of \(x,\) we get that the  left-hand side in \eqref{ineq} can be transformed as follows  
\begin{multline*}
\P \left\{
	\sqrt{\frac{n}{M}}
	\RR_{n}
	\leq u_M(x)
	-
	C_2 n^{1/2} M^{-\beta-1/2}
\right\}
\\ 
=
\P \left\{
	\sqrt{\frac{n}{M}}
	\RR_{n}
	\leq u_{M}\bigl(
		x-C_2 n^{1/2} M^{-\beta-1/2}a_{M}/S
	\bigr)
\right\}\\
= e^{-  e^{-x}} \left( 
			1 +e^{-x}  \Lambda_M (1+o(1))
		\right),
\end{multline*}
provided \(n^{1/2} M^{-\beta-1/2}a_{M}\) converges to 0 at polynomial rate. The last condition is fulfilled for any \(\alpha \in (1/(2\beta+1),1).\)
The same argument holds for the right-hand side of \eqref{ineq}, and the desired result follows.

\end{proof}

\section{One technical lemma}\label{secT}
\begin{customthm}{T}\label{lemapp} Denote 
\[ \Theta_{n,M}(x) :=  A_M(x+w_{n,M}) - A_M(x),\]
where \(w_{n,M}>0\) converges to zero as \(n,M \to \infty\). Then 
\begin{eqnarray*}
\sup_{x \in \R} \Theta_{n,M}(x) \leq c_1  M^{\theta_1} w_{n,M} + c_2 M^{-\theta_2}
\end{eqnarray*}\end{customthm}
for some \(c_1,c_2>0\) and any \(\theta_1, \theta_2>0.\)
\begin{proof}

For \(x<c_M-w_{n,M}\), we have \( \Theta_{n,M}(x) = 0\).

If \(x\geq c_M\), 
\begin{eqnarray*}
\Theta_{n,M}(x) &\leq& \sup_{x\geq c_M} \bigl[ A_M'(x)  \bigr] w_{n,M}=
\sup_{x\geq c_M}  \Bigl[ - 
 A_M(x)  \sum_{i=1}^k \pp'_i\bigl( 
x
\bigr) 
\Bigr]  M w_{n,M}.
\end{eqnarray*}
Note that for \(x \geq c_M\), we have 
\begin{eqnarray*}
0 \leq -
\sum_{i=1}^k \pp'_i\bigl( 
x
\bigr)  = \frac{2k}{\sqrt{S}} \phi\Bigl(\frac{x}{\sqrt{S}}\Bigr)
< \frac{2k}{\sqrt{2\pi S}} e^{-c_M^2/(2S)}\lesssim M^{-1} e^{\sqrt{2S\log(M)}}.
\end{eqnarray*}
Since \(e^{\sqrt{2S\log(M)}}\lesssim M^{\theta_1}\) for any \(\theta_1>0,\)
we conclude that 
\begin{eqnarray}\label{q1}
\sup_{x\geq c_M} \Theta_{n,M}(x) \lesssim 
c_1  M^{\theta_1} w_{n,M}, \qquad n,M \to \infty
\end{eqnarray}
for any \(\theta_1>0.\) Finally, if \(x \in (c_M-w_{n,M}, c_M)\), we have
\begin{eqnarray*}
 \Theta_{n,M}(x) = A_M(x+ w_{n,M})
 <  \Theta_{n,M}(c_M) + A_M(c_M),
\end{eqnarray*} 
where the first term is bounded by the expression in the right-hand side of \eqref{q1}.
Now let us consider the second term: 
\begin{eqnarray*}
A_M(c_M) &=& 
\exp \Bigl\{ 
-\frac{M \mathfrak{c}_0}{ 2\pi c_M}
e^{-c_M^2 / (2S)}
(1 +o(1))
\Bigr\}\\
&=& 
\exp \Bigl\{ 
-\frac{ \mathfrak{c}_0 e^{-S/2}}{ 2\pi}
\frac{ e^{(2S \log M)^{1/2}}}{(2S \log(M))^{1/2}}
(1 +o(1))
\Bigr\}.
\end{eqnarray*}
Applying~\eqref{bol}, we conclude that \(A_M(c_M)\) converges to zero at polynomial rate with respect to \(M\). This observation completes the proof of Lemma~\ref{lemapp}.\end{proof}

\end{document}